\newtheorem{theorem}{Theorem}
\newtheorem{remark}{Remark}
\newtheorem{proposition}{Proposition}
\newtheorem{proof}{Proof}
\newcommand{\maximize}{\mathop{\rm maximize}\limits}
\newcommand{\Real}{\mathbb{R}}
\DeclareMathOperator{\rank}{rank}
\def\qed{\hfill $\Box$} 
\begin{document}
%
% paper title
% Titles are generally capitalized except for words such as a, an, and, as,
% at, but, by, for, in, nor, of, on, or, the, to and up, which are usually
% not capitalized unless they are the first or last word of the title.
% Linebreaks \\ can be used within to get better formatting as desired.
% Do not put math or special symbols in the title.
\title{Minimal controllability problems on linear structural descriptor systems}
% author names and IEEE memberships
% note positions of commas and nonbreaking spaces ( ~ ) LaTeX will not break
% a structure at a ~ so this keeps an author's name from being broken across
% two lines.
% use \thanks{} to gain access to the first footnote area
% a separate \thanks must be used for each paragraph as LaTeX2e's \thanks
% was not built to handle multiple paragraphs
\author{Shun Terasaki and Kazuhiro~Sato \thanks{S. Terasaki and K. Sato are with the Department of Mathematical Informatics, Graduate School of Information Science and Technology, The University of Tokyo, Tokyo 113-8656, Japan, email:terasaki@g.ecc.u-tokyo.ac.jp (S. Terasaki), kazuhiro@mist.i.u-tokyo.ac.jp (K. Sato) }}
\maketitle
\thispagestyle{empty}
\pagestyle{empty}

% As a general rule, do not put math, special symbols or citations
% in the abstract or keywords.
\begin{abstract}
We consider minimal controllability problems (MCPs) on linear structural descriptor systems.
We address two problems of determining the minimum number of input nodes such that a descriptor system is structurally controllable.
We show that MCP0 for structural descriptor systems can be solved in polynomial time. 
This is the same as the existing results on a typical structural linear time invariant (LTI) systems.
However, the derivation of the result is considerably different because the derivation technique of the existing result cannot be used for descriptor systems. Instead, we use the Dulmage--Mendelsohn decomposition.
Moreover, we prove that the results for MCP1 are different from those for usual LTI systems. 
In fact, MCP1 for descriptor systems is an NP-hard problem, while MCP1 for LTI systems can be solved in polynomial time. 
\end{abstract}

% Note that keywords are not normally used for peerreview papers.
\begin{IEEEkeywords}
structural controllability, large-scale system, descriptor system, DM decomposition
\end{IEEEkeywords}

% For peer review papers, you can put extra information on the cover
% page as needed:
% \ifCLASSOPTIONpeerreview
% \begin{center} \bfseries EDICS Category: 3-BBND \end{center}
% \fi
%
% For peerreview papers, this IEEEtran command inserts a page break and
% creates the second title. It will be ignored for other modes.
\IEEEpeerreviewmaketitle

\section{Introduction} \label{sec:intro}
% form to use if the first word consists of a single letter:
% \IEEEPARstart{A}{demo} file is ....
% 
% form to use if you need the single drop letter followed by
% normal text (unknown if ever used by the IEEE):
% \IEEEPARstart{A}{}demo file is ....
% 
% Some journals put the first two words in caps:
% \IEEEPARstart{T}{his demo} file is ....
% 
% Here we have the typical use of a "T" for an initial drop letter
% and "HIS" in caps to complete the first word.
% \IEEEPARstart{C}{ontrollability}

Controllability, which was introduced by \cite{kalman1960general}, for network dynamical systems such as multi-agent systems\cite{mesbahi2010graph}, brain networks\cite{gu2015controllability}, and power networks\cite{pagani2013power} has received considerable research attention.
In the simplest case, systems can be modeled as linear time invariant (LTI) systems with state $x(t) \in \Real^n$ and input $u(t) \in \Real^m$,
\begin{align}
    \dot{x}(t) = Ax(t) + Bu(t), \label{eq:lti}
\end{align}
where $A \in \Real^{n\times n}$ and $B \in \Real^{n\times m}$ are large constant matrices.
In control theory literature, qualitative and quantitative controllability problems of designing $B$ under some constraints have been considered.
The qualitative problem includes a problem that makes system (\ref{eq:lti}) controllable with minimum input nodes \cite{liu2011controllability,olshevsky2015minimal,pequito2015framework,olshevsky2015minimum} by employing a structural control theory developed in \cite{lin1974structural,shields1976structural}.
In contrast, the quantitative problem includes the problem of maximizing some metrics of the system; in particular, the controllability Gramian \cite{summers2015submodularity,pasqualetti2014controllability,sato2020controllability,yan2015spectrum}.
In a real-world system, each parameter in $A$ and $B$ is usually not precisely determined; further, the quantitative problem often becomes computationally difficult as the state dimension $n$ becomes larger. That is, we can only handle up to a few hundred state dimensions in practice. Thus, considering the qualitative problem is more suitable for large-scale networked systems. 

To address the controllability problem of large-scale networked systems, we consider the qualitative problems called minimal controllability problems (MCPs)\cite{olshevsky2015minimal}.
The MCP0 is a problem that finds an $n\times m$ matrix $B$ such that system (\ref{eq:lti}) is structurally controllable and $m$ is minimum. Here, system (\ref{eq:lti}) is called structurally controllable if system (\ref{eq:lti}) becomes controllable by setting nonzero values in $A$ and $B$ \cite{lin1974structural}.
The MCP1 is a problem that finds an $n\times n$ diagonal matrix $B$ such that system (\ref{eq:lti}) is structurally controllable and the number of nonzero elements in $B$ is minimum.
Although both MCP0 and MCP1 can be seen as a problem of minimizing the number of input nodes, each input in the MCP1 case is not allowed to connect to multiple nodes (the visual difference is shown in Fig.~\ref{fig:comparisonMCP01}). 
Polynomial time algorithms for MCP0 and MCP1 associated with system (\ref{eq:lti}) were proposed in \cite{liu2011controllability,pequito2015framework,olshevsky2015minimum}, respectively.

In this paper, we extend the results of \cite{liu2011controllability,pequito2015framework,olshevsky2015minimum} on MCPs for a descriptor system
\begin{align}
    F\dot{x}(t) = Ax(t) + Bu(t),  \label{eq:descriptor}
\end{align}
which is a generalized system of system (\ref{eq:lti}), where $F\in \Real^{n\times n}$ can be a singular matrix. The descriptor formulation is well suited to model practical systems with algebraic constraints such as electric circuit systems \cite{dai1989singular,murota00,duan2010analysis}.
The structural controllability of descriptor system (\ref{eq:descriptor}) is investigated by \cite{yamada1985generic,yip1981solvability}; however, there is less discussion in terms of the input selection problem.
In fact, to the best of our knowledge, the input selection problem based on structural controllability for descriptor system (\ref{eq:descriptor}) is only found in \cite{clark2017input}. However, \cite{clark2017input} only deals with a special case in (\ref{eq:descriptor}), as explained in Section~\ref{subsec:resultdesc}.

\begin{figure}[t]
    \centering
    \begin{tabular}{c}
    \begin{minipage}{0.5\hsize}
    \centering
    \includegraphics[width=3.0cm]{./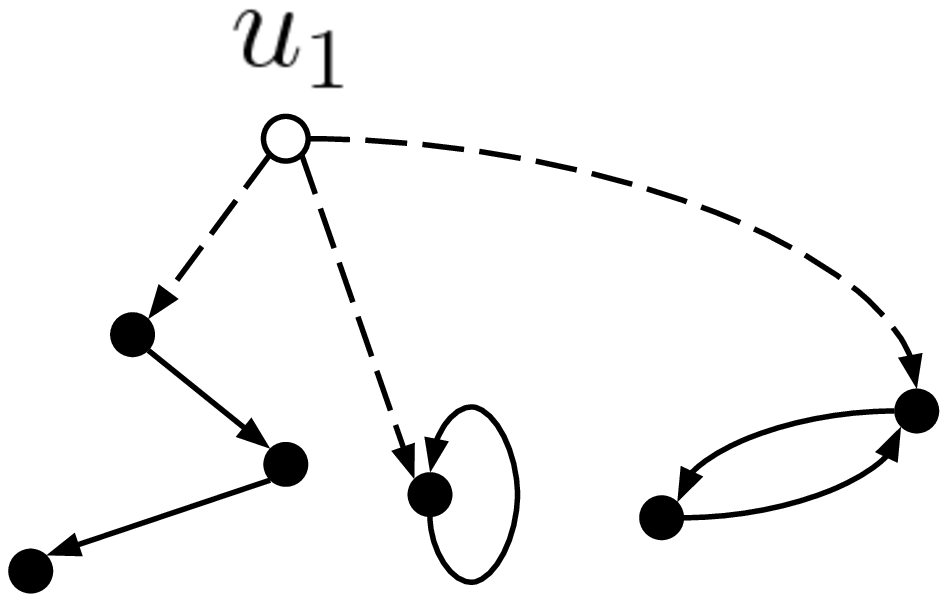}
    \hspace{1.8cm} (a) MCP0 
    \end{minipage}
    \begin{minipage}{0.5\hsize}
    \centering
    \includegraphics[width=3.0cm]{./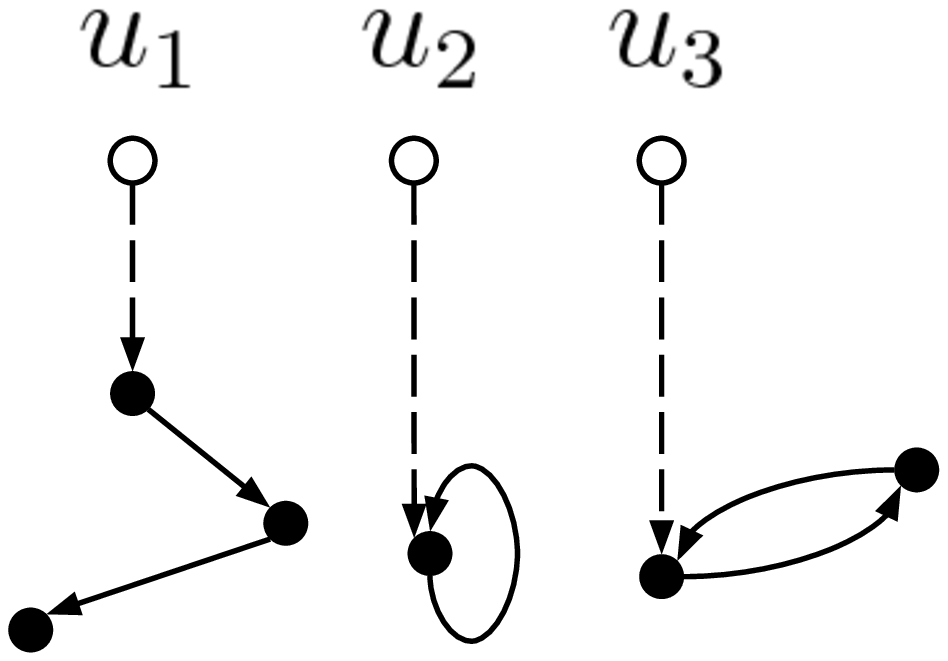}
    \hspace{1.8cm} (b) MCP1
    \end{minipage}
    \end{tabular}
    \caption{Difference between MCP0 and MCP1.}
    \label{fig:comparisonMCP01}
\end{figure}

The contributions of this study can be summarized as follows.
\begin{itemize}
    \item We provide the solution of MCP0 for structural descriptor system (\ref{eq:descriptor}). The solution shows that the minimum number of input nodes is determined by maximum matching. This is the same as the result in \cite{liu2011controllability}.
    However, the derivation is different from that in \cite{liu2011controllability} based on graph concepts such as cacti. This is because a directed graph representation cannot be applied to descriptor system (\ref{eq:descriptor}) unlike LTI system (\ref{eq:lti}), as explained in Section~\ref{subsec:resultlti}. Thus, instead of the graph concepts used in \cite{liu2011controllability}, we use a bipartite graph representation and the Dulmage--Mendelsohn (DM) decomposition  \cite{dulmage1963two}\cite{murota00} for descriptor system (\ref{eq:descriptor}).
    \item We prove the computational hardness of MCP1 for structural descriptor system (\ref{eq:descriptor}). That is, we show that the time complexity belongs to the NP-hard class by reducing the set cover problem, which is an NP-hard problem, to MCP1 for an instance of structural descriptor system (\ref{eq:descriptor}). The result is completely different from that for system (\ref{eq:lti}), which can be solved in polynomial time.
\end{itemize}

The remainder of this paper is organized as follows.
Section~\ref{sec:prevresults} describes the results of previous studies on MCPs.
The formulation of the novel MCPs for structural descriptor systems is described in Section~\ref{sec:prb}.
In Section~\ref{sec:analysis}, we provide the analysis of the MCPs for system (\ref{eq:descriptor}). The conclusions are presented in Section~\ref{sec:conclusion}.

\section{Previous results of the MCPs}
\label{sec:prevresults}
This section describes the existing MCPs results to highlight the difference with our study.
\subsection{Basic concepts of graph theory}
\label{subsec:graph}
We summarize the basic concepts of graph theory that are used in this study.

A strongly connected component (SCC) for a directed graph with nodes set $V$ is a maximal subset $C\subseteq V$ whose nodes $u,v \in C$ can be connected by a directed path on the graph (Fig.~\ref{fig:directedgraph}).
A directed matching of a directed graph with edges set $E$ is a subset $M\subseteq E$ that does not share common start or end nodes. Directed matching with the maximum size is called maximum directed matching.

For a bipartite graph $G = (V^+,V^-;E)$, $\partial^+ e$ and $\partial^- e$ with edge $e = (v^+,v^-) \in E$ denote the nodes of $V^+$ and $V^-$, respectively. Edges set $M\subseteq E$ is matching if it does not share the end of each edge. A matching $M$ is called maximum matching if $M$ contains the largest possible number of edges (Fig.~\ref{fig:bipartite}). If all nodes are ends of some edges in a maximum matching $M$, $M$ is called perfect.
For a matching $M$ of $G$, the elements of $V^+\setminus \partial^+M$ and $V^-\setminus \partial^-M$ are called right-unmatched nodes and left-unmatched nodes, respectively.

\begin{figure}
    \centering
    \includegraphics[width=3.5cm]{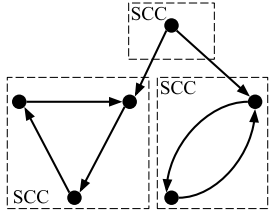}
    \caption{Directed graph and its SCCs.}
    \label{fig:directedgraph}
\end{figure}
\begin{figure}
    \centering
    \includegraphics[width=4cm]{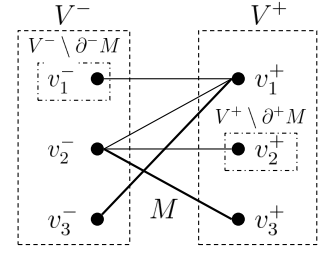}
    \caption{Bipartite graph. Bold edges represent an example of the maximum matching $M$ of the graph. The sets of left and right unmatched nodes are $V^- \setminus \partial^- M = \{v_1^-$\} and $V^+\setminus \partial^+ M = \{v_2^+\}$, respectively}
    \label{fig:bipartite}
\end{figure}
\subsection{Results of MCPs on system (\ref{eq:lti})}
\label{subsec:resultlti}
We first introduce the results of MCPs for structural system (\ref{eq:lti}).
MCP0 and MCP1 as mentioned in Section~\ref{sec:intro} can be formulated as
\begin{align*}
& \text{(MCP0)} 
\begin{cases}
    \underset{\text{$B\in \mathcal{G}^{n\times m}$}}{\text{minimize}} & m \\
    \text{subject to} & \text{system (\ref{eq:lti}) is} \\ & \text{structurally controllable,}
\end{cases}\\
& \text{(MCP1)}
    \begin{cases}
        \underset{\text{$b\in \mathcal{G}^{n}$}}{\text{minimize}} & \|b\|_0 \\
        \text{subject to} & \text{system (\ref{eq:lti}) with $B = \mathrm{diag}(b)$ is} \\ & \text{structurally controllable,}
    \end{cases}
\end{align*}
where $\|\cdot\|_0$ denotes the number of nonzero elements of a matrix, and $\mathcal{G}^{n\times m}$ and $\mathcal{G}^{n}$ denote the set of all $n\times m$ generic matrices and $n$ generic vectors, respectively.
A matrix is called generic if each nonzero element is an independent parameter. For a more precise definition of the generic matrix, see \cite{murota00}.

To provide the solution to MCPs, we define a directed graph representation $G(A,B) = (V,E)$ of system (\ref{eq:lti}), which is the fundamental tool for structural system analysis.
The nodes set $V = X \cup U$ with
\begin{align}
    X & = \{x_1,\dots,x_n\}, \label{eq:nodex}\\
    U & = \{u_1,\dots,u_m\}, \label{eq:nodeu}
\end{align}
corresponds to the state and input variables, and an edge set is defined as $E = E_A \cup E_B$ with $E_A = \{(x_j,x_i)\mid a_{ij}\ne 0\}$ and $E_B = \{(x_j,u_i)\mid b_{ij} \ne 0 \}$. 
Here, $E_A$ represents the connections between state nodes and $E_B$ represents connections between the state and input nodes.
A digraph representation of the autonomous system (\ref{eq:lti}) with $u(t) = 0$ is defined as $G(A) := (X,E_A)$.

The minimum number of input nodes $n_D$, \textit{i.e.,} the optimal value of MCP0, is obtained by finding the maximum directed matching of $G(A)$ \cite{liu2011controllability}
\begin{align*}
    n_D = \max \{ n - |M^\ast| , 1 \},
\end{align*}
where $M^\ast$ denotes the maximum directed matching of $G(A)$.
% This means that the input nodes correspond to the left-unmatched nodes of $M^\ast$.
The maximum directed matching $M^\ast$ of $G(A)$ can be calculated as the maximum bipartite matching by translating $G(A)$ to a corresponding bipartite graph $B(A) = (V^+,V^-,E)$. The node sets $V^+$ and $V^-$ are defined as $V^+ := X^+\cup U$ and $V^- := X^-$ with $X^+ := \{x_1^+,\dots,x_n^+\}$ and $X^- := \{x_1^-,\dots,x_n^-\}$. The elements of $X^+$ and $X^-$ correspond to the column and row numbers of $A$, respectively.
In addition, the edge set $E$ is defined as $E_A \cup E_B$, where $E_A = \{(x_j^+,x_i^-)\mid a_{ij}\ne 0\}$ and $E_B = \{(u_j,x_i^-)\mid b_{ij} \ne 0 \}$.
Then, the maximum directed matching $M'$ of $G(A)$ can be regarded as the maximum matching $M$ of $B(A)$ by identifying $V^+$ and $V^-$ of $B(A)$. 
Moreover, the minimum input nodes in MCP0 correspond to the left-unmatched nodes of a maximum matching of $B(A)$.

To describe the solution to MCP1, we note that the optimal value $p$ of MCP1 is given as $p = m + \beta - \alpha$,
which was shown in \cite{pequito2015framework}, where $m$ denotes the number of left-unmatched nodes in any maximum matching of the bipartite graph $B(A)$, and $\beta$ denotes the number of root SCCs, that have no incoming links from other SCCs.
A top assignable SCC is a root SCC that contains at least one left-unmatched node associated with a maximum matching $M^\ast$, and $\alpha$ is the maximum number of top assignable SCCs.
This maximum top assignability index can be calculated using the minimum weight maximum matching whose time complexity is $O(|V|^3)$. For more details, see \cite{korte2012combinatorial}. A faster method for solving MCP1 using the minimum cost allowed matching is proposed by \cite{olshevsky2015minimum}; its time complexity is $O(|V|+|E||V|^{1/2})$.

\subsection{Results of MCPs on system (\ref{eq:descriptor})}
\label{subsec:resultdesc}
The input selection problems that make descriptor system (\ref{eq:descriptor}) structurally controllable are proposed in \cite{clark2017input}. In \cite{clark2017input}, the authors studied the following special case of system (\ref{eq:descriptor}) and the problem of minimizing the size of $S$.
\begin{align}
\begin{bmatrix}
    \hat{F} \\
    0
\end{bmatrix} \begin{bmatrix}
 \dot{x}_R(t) \\
 \dot{x}_S(t)
\end{bmatrix}
& = \begin{bmatrix}
    A_{RR} & A_{RS} \\
    0_{|S|\times (n-|S|)} & -I_{|S|\times |S|}
\end{bmatrix}
\begin{bmatrix}
x_R(t) \\
x_S(t)
\end{bmatrix}\notag \\
&  \ + \begin{bmatrix}
0_{(n-|S|)\times |S|} \\
I_{|S|\times |S|} 
\end{bmatrix} u(t), \label{eq:clark}
\end{align}
Moreover, they showed that this problem can be solved in polynomial time by formulating it as a matroid intersection problem
\begin{align*}
    \maximize\{|R| \colon R\in \mathcal{M}^\ast_1 , R\in \mathcal{M}^\ast_2 \},
\end{align*}
where $\mathcal{M}^\ast_1$ and $\mathcal{M}^\ast_2$ denote the duals of the matroid determined by the matrices of descriptor system (\ref{eq:clark}).
This is a special case of MCP1, as formulated in Section~\ref{sec:prb} because the inputs must be connected to state nodes directly. To the best of our knowledge, this is the only work on the minimum input selection for ``structural'' descriptor system (\ref{eq:descriptor}), although the structural controllability analysis without considering input selections can be found in \cite{murota00,yamada1985generic}. We address a more general MCP1 in this paper.

Unlike the problems in \cite{clark2017input} and this study, \cite{zhang2020sparsest} considered the input selection problems for non-structural descriptor system (\ref{eq:descriptor}) formulated as
\begin{align}
& 
\begin{cases}
    \underset{\text{$B\in \Real^{n\times n}$}}{\text{minimize}} & \|B\|_0 \\
    \text{subject to} & \text{system (\ref{eq:descriptor}) is controllable,}
\end{cases} \label{prb:zhang1} \\
&
    \begin{cases}
        \underset{\text{$B\in \Real^{n\times l}$}}{\text{minimize}} & \|B\|_0 \\
        \text{subject to} & \text{system (\ref{eq:descriptor}) is controllable,}
    \end{cases} \label{prb:zhang2}
\end{align}
where $l$ denotes an integer $l\leq n$. 
That is, in contrast to the ``structural'' controllability problem in \cite{clark2017input} and this paper, \cite{zhang2020sparsest} addressed usual controllability problems.
Moreover, the column size of $B$ in Problem~(\ref{prb:zhang2}) is given in advance, whereas MCP0 is the problem of minimizing the column size of $B$.
In \cite{zhang2020sparsest}, the authors showed that Problems~(\ref{prb:zhang1}) and (\ref{prb:zhang2}) are NP-hard. These results are analogous to the NP hardness of MCPs for non-structural LTI systems \cite{olshevsky2015minimal}.

\section{Problem Settings}
\label{sec:prb}
In this section, we formulate novel MCPs for descriptor system (\ref{eq:descriptor}).

Because the controllability of system (\ref{eq:descriptor}) is different from that for system (\ref{eq:lti}), we first define the controllability for descriptor system (\ref{eq:descriptor}) and describe the equivalent conditions.

We assume that system (\ref{eq:descriptor}) is solvable, \textit{i.e.}, for any initial state $x(0)$ with an admissible input $u(t)$, there exists a unique solution $x(t)$ to Eq.~(\ref{eq:descriptor}). This condition is equivalent to 
\begin{align}
    \rank (A-sF) = n, \label{eq:solvability}
\end{align}
where $s$ is an indeterminant \cite{murota00} \cite{yip1981solvability}.

Some definitions of the controllability\cite{yip1981solvability} are provided in the descriptor system theory, as explained in Remark 1.
In this paper, we call system (\ref{eq:descriptor}) controllable if for any admissible initial state $x(0)$ satisfing equation (\ref{eq:descriptor}), there exists an input $u(t)$ and a final time $T\geq 0$ such that $x(T) = 0$.

An algebraic characterization of the controllability is provided below \cite{yip1981solvability, berger2013controllability}:
\begin{proposition}
    \label{prop:scalg}
    Descriptor system (\ref{eq:descriptor}) is controllable \textit{if and only if} 
    \begin{align}
        \rank \left[ A-zF \mid B \right] & = n \quad (z\in \mathbb{C}). \label{eq:rank}
    \end{align}
\end{proposition}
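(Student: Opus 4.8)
The plan is to bring (\ref{eq:descriptor}) into Weierstrass canonical form and reduce the statement to the classical Hautus (PBH) test for an ordinary LTI pair. By the solvability condition (\ref{eq:solvability}) the pencil $A - sF$ is regular, so there exist nonsingular $P,Q\in\Real^{n\times n}$ with $PFQ = \mathrm{diag}(I_{n_1},N)$ and $PAQ = \mathrm{diag}(J,I_{n_2})$, where $n_1+n_2 = n$ and $N$ is nilpotent of index $\nu$ (see, e.g., \cite{dai1989singular}). Writing $Q^{-1}x = (x_1^\top,x_2^\top)^\top$ and $PB = (B_1^\top,B_2^\top)^\top$, the system splits into the \emph{slow} part $\dot x_1 = J x_1 + B_1 u$ and the \emph{fast} part $N\dot x_2 = x_2 + B_2 u$. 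The fast part forces $x_2 = -\sum_{i=0}^{\nu-1} N^i B_2 u^{(i)}$; hence an initial value is admissible precisely when $x_2(0)$ has this form for some input, and $x_2(T)=0$ holds automatically as soon as $u$ and its derivatives up to order $\nu-1$ vanish at $t=T$.

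First I would prove that (\ref{eq:descriptor}) is controllable in the sense defined above \emph{if and only if} the ordinary pair $(J,B_1)$ is controllable. For the ``if'' part, given an admissible $x(0)$ choose, on $[0,T-\varepsilon]$, a smooth input that is consistent with the finitely many derivative values $u(0),\dots,u^{(\nu-1)}(0)$ fixed by $x(0)$ and that steers $x_1(0)$ to $0$ (possible because $(J,B_1)$ is controllable and these are only finitely many pointwise constraints, which do not shrink the reachable set), and set $u\equiv 0$ on $[T-\varepsilon,T]$; then $x_1(T)=0$ and $x_2(T)=0$, so $x(T)=0$. For the ``only if'' part, if $(J,B_1)$ is not controllable its reachable subspace $\mathcal{R}\subsetneq\Real^{n_1}$ is a proper $J$-invariant subspace; since $x_1(0)$ can be chosen freely among admissible states and $x_1(T)=0$ forces $x_1(0)\in e^{-JT}\mathcal{R}=\mathcal{R}$, any $x(0)$ with $x_1(0)\notin\mathcal{R}$ cannot be steered to the origin.

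Next I would translate controllability of $(J,B_1)$ into the pencil rank condition. Left-multiplying $[A-zF\mid B]$ by $P$ and right-multiplying by $\mathrm{diag}(Q,I_m)$ preserves rank and produces the block matrix $\left[\begin{smallmatrix} J-zI & 0 & B_1\\ 0 & I-zN & B_2\end{smallmatrix}\right]$. Since $N$ is nilpotent, $I-zN$ is invertible for every $z\in\mathbb{C}$; using its columns to eliminate $B_2$ and then permuting the block columns yields $\rank[A-zF\mid B] = n_2 + \rank[J-zI\mid B_1]$. Therefore $\rank[A-zF\mid B] = n$ for all $z\in\mathbb{C}$ is equivalent to $\rank[J-zI\mid B_1] = n_1$ for all $z\in\mathbb{C}$, which by the Hautus test is exactly controllability of $(J,B_1)$. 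Combining this with the equivalence of the previous paragraph proves the proposition.

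I expect the delicate point to be the ``if'' direction above: one must produce a \emph{single} admissible input that simultaneously (i) matches the consistency constraints imposed at $t=0$ by the prescribed $x(0)$, (ii) drives $x_1$ to the origin, and (iii) vanishes together with enough derivatives at $t=T$ so that the algebraic variable $x_2$ is zero there. The key is that (i) and (iii) restrict only finitely many pointwise values of $u$ and therefore leave the controllability of $(J,B_1)$ intact; the rest of the argument is routine bookkeeping with the Weierstrass form. Alternatively, this standard reduction may simply be quoted from \cite{yip1981solvability,berger2013controllability}, leaving the pencil-rank computation of the third paragraph as the only step to spell out.
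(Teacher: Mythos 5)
Your proof is correct: the paper does not prove Proposition~\ref{prop:scalg} itself but quotes it from \cite{yip1981solvability,berger2013controllability}, and your Weierstrass-form reduction (slow/fast splitting under the regularity guaranteed by (\ref{eq:solvability}), equivalence of null-controllability of the descriptor system with controllability of the slow pair $(J,B_1)$, and the rank bookkeeping using invertibility of $I-zN$ to land on the Hautus test) is exactly the standard argument behind those references. The one delicate point you flag—constructing a single admissible smooth input that matches the finitely many consistency constraints at $t=0$, steers $x_1$ to the origin, and vanishes together with its first $\nu-1$ derivatives at $t=T$—is handled adequately by your observation that finitely many pointwise derivative constraints do not shrink the reachable set of a controllable pair, so no gap remains.
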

In Proposition~\ref{prop:scalg}, the matrix $\left[ A-sF \mid B \right]$ is called the modal controllability matrix.

System (\ref{eq:descriptor}) is called structurally controllable if condition (\ref{eq:rank}) in Proposition \ref{prop:scalg} holds for (\ref{eq:descriptor}) with generic matrices $F$, $A$, and $B$.
The generic matrices mean that all non-zero entries represent free parameters, which are algebraically independent \cite{murota00}.

We now formulate novel MCPs for descriptor system (\ref{eq:descriptor}).
MCP0 and MCP1 for descriptor system (\ref{eq:descriptor}) can be formalized as
\begin{align*}
    & \text{(MCP0)}\begin{cases}
        \underset{\text{$B\in \mathcal{G}^{n\times m}$}}{\text{minimize}} & m \\
        \text{subject to} & \text{system (\ref{eq:descriptor}) is structurally controllable,}
    \end{cases} \\
    & \text{(MCP1)}\begin{cases}
        \underset{\text{$b\in \mathcal{G}^{n}$}}{\text{minimize}} & \|b\|_0 \\
        \text{subject to} & \text{system (\ref{eq:descriptor}) with $B = \mathrm{diag}(b)$ is} \\ & \text{structurally controllable.}
    \end{cases}
\end{align*}

Although the $i$-th row differential equation corresponds to $x_i$ for LTI system (\ref{eq:lti}), there is no such correspondence because of the presence of $F$ for descriptor system (\ref{eq:descriptor}). Therefore, the directed graph representation of the system in \cite{liu2011controllability} cannot be applied. Thus, we cannot use the proofs of the theorems on MCP0 \cite{liu2011controllability} and MCP1\cite{pequito2015framework} for descriptor system (\ref{eq:descriptor}) with generic matrices.

\begin{remark}
\label{rem:controllability}
Controllability for descriptor systems has some variants, such as complete controllability, R-controllability, impulse controllability, and strong controllability\cite{berger2013controllability}. Our controllability in this paper corresponds to R-controllability. For more detail, see \cite{dai1989singular,berger2013controllability} and references therein.
\end{remark}
% To analyze MCP0 and MCP1 for descriptor system (\ref{eq:descriptor}), the graph representation for system (\ref{eq:descriptor}), which is different from that for system (\ref{eq:lti}), is presented in the next section.
\section{Analysis of the MCPs}
\label{sec:analysis}
We analyze the MCPs for descriptor system (\ref{eq:descriptor}) with generic matrices using a graph theoretic approach.

To this end, we first describe the graphical representation of descriptor system (\ref{eq:descriptor}) with generic matrices and graph theoretical controllability.

Although there are several graph representations of descriptor system (\ref{eq:descriptor}) \cite{reinschke1997digraph,yamada1985generic,murota00}, we use a bipartite graph representation and its DM decomposition proposed in \cite{murota00}. The DM decomposition is explained in Appendix~\ref{app:dmdecomp}. This bipartite representation is considerably different from the bipartite graph for LTI system (\ref{eq:lti}) described in Section~\ref{subsec:resultlti}.

The bipartite graph $G = (V^+,V^-; E)$ associated with descriptor system (\ref{eq:descriptor}) is defined as follows: the node sets $V^+ , V^-$ and the edge set $E$ are defined as
\begin{align}
\begin{cases}
   V^+ := X \cup U, \  \ 
   V^- := \{e_1,\dots,e_n\}, \\
   E := E_A \cup E_F \cup E_B 
\end{cases}
\label{eq:defgraph}
\end{align}
with $E_A := \{ ( e_j , x_i ) \mid A_{ij} \neq 0 \}$, $E_F := \{ ( e_j , x_i ) \mid F_{ij} \neq 0 \}$ and $E_B = \{ ( e_j , u_i ) \mid B_{ij} \neq 0 \}$, where $X$ and $U$ are defined as (\ref{eq:nodex}) and (\ref{eq:nodeu}), respectively, and $e_i$ corresponds to an $i$-th row of system (\ref{eq:descriptor}).
An edge belonging to $E_F$ is called an s-arc.
Further, we call a DM s-component for a DM component that has s-arcs.
The symbol $\nu(G)$ denotes the size of a maximum matching of $G$. Let $G_k \ ( k = 0 , \dots , b , \infty )$ be DM components, and $G_k \ ( k = 1,\dots,b )$ are called consistent DM conponents; $G_0$ and $G_\infty$ are called inconsistent DM components.

In \cite{murota00}, we find the following graph theoretic characterization of structural controllability for descriptor system (\ref{eq:descriptor}).
\begin{proposition}
\label{prop:sc}
Descriptor system (\ref{eq:descriptor}) is structurally controllable \textit{if and only if}
\begin{enumerate}
    \item $\nu(G_{A-sF}) = n$
    \item $\nu(G_{\left[A \mid B\right]}) = n$
    \item No consistent DM components contain s-arcs
\end{enumerate}
where $G_{A-sF} = (X,V^-; E_A\cup E_F)$ and $G_{\left[A \mid B\right]} = (X,V^-; E_A\cup E_B)$. 
\end{proposition}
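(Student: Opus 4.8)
The plan is to translate the three graph conditions into the algebraic controllability test of Proposition~\ref{prop:scalg}, read for generic $F,A,B$, and then to dissect that test using the block-upper-triangular form supplied by the DM decomposition. The one tool I use repeatedly is the classical identity that the rank of a generic matrix over a field equals its term rank, i.e.\ its maximum matching size; this is what links ranks to the numbers $\nu(\cdot)$. By Proposition~\ref{prop:scalg}, structural controllability of the generic system is equivalent to $\rank[A-zF\mid B]=n$ for all $z\in\mathbb{C}$, under the standing solvability assumption $\rank_{\mathbb{C}(s)}(A-sF)=n$. Since the bipartite graph of the generic pencil $A-sF$ is $G_{A-sF}$, the term-rank identity turns solvability into condition~1; granting it, $A-zF$ is nonsingular for all but finitely many $z$, so the test is automatic off a finite exceptional set; and the instance $z=0$ of the test is $\rank[A\mid B]=n$, whose generic matrix has bipartite graph $G_{[A\mid B]}$, so it is condition~2. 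Only the finitely many nonzero exceptional values of $z$ remain.

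Next I would take the DM decomposition of $G$ (equivalently of $[A-sF\mid B]$) and put the matrix in block-upper-triangular form with the horizontal tail $G_0$ first and the consistent square blocks $G_1,\dots,G_b$ after it; there is no $G_\infty$ because $\nu(G)=n$ equals the number of rows. Two facts then do the work. First, the tail block $M_0$ has full row rank at every $z$: for $z\neq0$ its zero/nonzero pattern agrees with the generic one up to at most one vanishing entry (two mixed entries vanish at the same $z$ only under a polynomial relation that fails generically), and since $G_0$ is underdetermined and contains every input column, a rank-one deficiency localised in $G_0$ is always absorbed; the value $z=0$ is handled instead by condition~2. Second, the remaining part is square and block-triangular with diagonal blocks $A_k-zF_k$, so its determinant is $\prod_{k}\det(A_k-zF_k)$; hence for a fixed $z\neq0$ one has $\rank[A-zF\mid B]=n$ if and only if every $A_k-zF_k$ is nonsingular. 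Combining these with the previous paragraph: the algebraic test holds for all $z\in\mathbb{C}$ if and only if condition~2 holds and, for every $k$, $\det(A_k-sF_k)$ has no nonzero complex root.

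It remains to identify the last requirement with condition~3. If $G_k$ carries no s-arc then $A_k-sF_k=A_k$ is a constant generic matrix of term rank $|G_k|$ (consistency), hence nonsingular everywhere, so ``no consistent DM $s$-component'' suffices. Conversely, suppose a consistent component $G_k$ carries an s-arc. Since every edge of a DM component lies in a perfect matching of that component, the s-arc lies in a perfect matching of $G_k$, so $\det(A_k-sF_k)$ has positive degree in $s$ and is not identically zero, hence it has a complex root $z^{\ast}$. If $z^{\ast}\neq0$, the test fails at $z^{\ast}$. Otherwise $\det(A_k-sF_k)=c\,s^{d}$ with $d\geq1$, so $\det A_k=0$ and the $A$-edge subgraph of $G_k$ has no perfect matching. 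Using that a row of a block sees only the columns of that block and of later blocks, and that no row of a consistent component has a $B$-edge (such an edge would pull the row into $G_0$), the rows lying in $G_k$ and in the later consistent blocks can only be matched among the corresponding columns in $G_{[A\mid B]}$; peeling off the later blocks from the back forces the rows of $G_k$ to be matched within $G_k$'s own columns, which is impossible at $z=0$. Hence $\nu(G_{[A\mid B]})<n$ and condition~2 fails. In either branch conditions 1--3 do not all hold, which closes the equivalence.

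I expect the third paragraph to be the crux: the genuine content is that an s-arc inside a \emph{consistent} DM component can never be compensated by $B$ --- either it manufactures a finite nonzero generalized eigenvalue beyond the reach of the input columns, which sit in $G_0$ rather than inside a consistent block, or, in the degenerate case where its only generalized eigenvalue is at the origin, it already obstructs condition~2. The remainder is bookkeeping with the term-rank identity and the triangular shape of the DM decomposition, although one should also verify carefully the ``no necessary edge in $G_0$'' claim used for the tail block $M_0$.
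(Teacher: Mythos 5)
First, a point of reference: the paper itself offers no proof of Proposition~\ref{prop:sc}; it is imported verbatim from Murota \cite{murota00}, so there is no in-paper argument to compare against. Your reconstruction follows the natural (and essentially the textbook) route: generic rank equals term rank, which converts solvability and the $z=0$ instance of Proposition~\ref{prop:scalg} into conditions 1 and 2; the DM block-triangularization of $[A-sF\mid B]$ with the horizontal tail $G_0$ (which indeed contains all input columns under solvability) reduces the remaining $z\neq 0$ test to the diagonal blocks and the tail; consistent blocks without s-arcs are constant generic and hence everywhere nonsingular; and your necessity argument is sound --- an s-arc in a consistent block lies in a perfect matching of that block, so $\det(A_k-sF_k)$ has positive degree, and either a nonzero root kills the rank test directly (the rows of the consistent blocks carry no $B$-edges and see no tail columns, so the deficiency cannot be repaired), or $\det A_k=0$ and your peeling argument correctly shows $\nu(G_{[A\mid B]})<n$.

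The genuine gap is in the sufficiency direction, at exactly the spot you flag: the claim that the tail block $D_0(z)$ has full row rank for \emph{every} $z\neq 0$. Your justification only excludes one failure mechanism --- two mixed entries vanishing at the same $z$ --- but rank can drop at a $z^\ast$ where no individual entry vanishes: what must be ruled out is a common nonzero root of \emph{all} maximal ($|V_0^-|\times|V_0^-|$) minors of $D_0(s)$, i.e.\ one must show that for generic parameters the gcd of these minors is free of nonzero roots (equivalently, the generic horizontal-tail pencil has no finite elementary divisors). ``A rank-one deficiency localised in $G_0$ is always absorbed'' is not an argument for this; even granting the auxiliary fact that no edge of $G_0$ is contained in every row-saturating matching (true, by an alternating-path exchange along the reachability paths defining $V_0$, but also unproved in your sketch), full \emph{term} rank of the pattern surviving at $z^\ast$ does not give full numerical rank, because at an algebraic point $z^\ast$ the remaining entries are no longer algebraically independent. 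Establishing this tail property is the real content of the cited result in \cite{murota00} (there it is handled through the theory of mixed polynomial matrices/CCF); as written, your proof assumes precisely the crux it needs.
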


Condition 1) represents the solvability of the system described in Eq.~(\ref{eq:solvability}) in Section~\ref{sec:prb},
and the latter two conditions correspond to (\ref{eq:rank}) in Proposition \ref{prop:scalg}.
From assumption (\ref{eq:solvability}), condition 1) in Proposition~\ref{prop:sc} is always satisfied. This means that there are no inconsistent components $G_0$ and $G_\infty$ of $G_{A-sF}$.

To illustrate 1), 2), and 3) in Proposition \ref{prop:sc}, consider an example of descriptor system (\ref{eq:descriptor}) with
\begin{align}
    F = \begin{bmatrix}
        f_1 & 0 & 0 \\
        f_2 & f_3 & f_4 \\
        0 & 0 & 0
    \end{bmatrix},\ 
    A = \begin{bmatrix}
        a_1 & 0 & 0 \\
        a_3 & 0 & 0 \\
        0 & 0 & a_4 
    \end{bmatrix},\ 
    B = \begin{bmatrix}
        0 \\
        b \\
        0
    \end{bmatrix}. \label{eq:examplesystem}
\end{align}
Then, the corresponding bipartite representation is shown in Fig.~\ref{fig:dmdecomp} (a).
The maximum matching sizes for $G_{A-sF}$ and $G_{[A\mid B]}$ are both 3, which means that conditions 1) and 2) in Proposition~\ref{prop:sc} hold.
The graph $G_0$ is an inconsistent DM component and $G_1,G_2$ are consistent DM components;
further, the partial order $\preceq$ is introduced by $ G_0 \preceq G_1$ and $G_0 \preceq G_2$. In this case, $G_\infty$ does not exist because we can identify a maximum matching of $G$ as a perfect matching $M = \{(x_1,e_1),(x_2,e_2),(x_3,e_3)\}$, that is, $V^\infty = \emptyset$, where $V^\infty$ is defined in Appendix~\ref{app:dmdecomp}. This holds under the solvability (\ref{eq:solvability}) of descriptor system (\ref{eq:descriptor}).
This decomposition corresponds to the block diagonalization of the modal controllability matrix
\begin{align*}
    D(s) := \left[ A-sF \mid B \right] 
    =
    \begin{bmatrix}
        a_1 - sf_1 &  &  &  \\
        a_3 - sf_2 & -sf_3 & -sf_4 & b \\
         &  & a_4 & 
    \end{bmatrix}.
\end{align*}
That is, we can choose permutation matrices $P,Q$ and make $PD(s)Q$ block the upper triangular matrix as
\begin{align}
     PD(s)Q =
     \begin{bmatrix}
        b & -sf_3 & a_2 - s f_2 & -sf_4  \\
        & & a_1-sf_1 & \\
        & & & a_4
    \end{bmatrix}.   \label{eq:dmdecomp}
\end{align}
The diagonal block matrices are given as
\begin{align*}
    D_0(s) = 
    \begin{bmatrix}
        b & -sf_3
    \end{bmatrix}, 
    D_1(s) =
    \begin{bmatrix}
        a_1-sf_1
    \end{bmatrix},  
    D_2(s) = \begin{bmatrix}
        a_4
    \end{bmatrix}
\end{align*}
Block matrices $D_i$ correspond to DM components $G_i$.
As can be seen from Fig.~\ref{fig:dmdecomp} (b) and Eq.~(\ref{eq:dmdecomp}), a consistent component $G_1$ has an s-arc.
Thus, this system is not structurally controllable by condition 3) in Proposition \ref{prop:sc} because $x_1$ is not driven by an input in the 1st equation of the system $f_1 \dot{x}_1(t) = a_1 x_1(t)$.

\begin{figure}
    \centering
    \begin{tabular}{c}
    \begin{minipage}{0.5\hsize}
    \centering
    \includegraphics[width=3.0cm]{./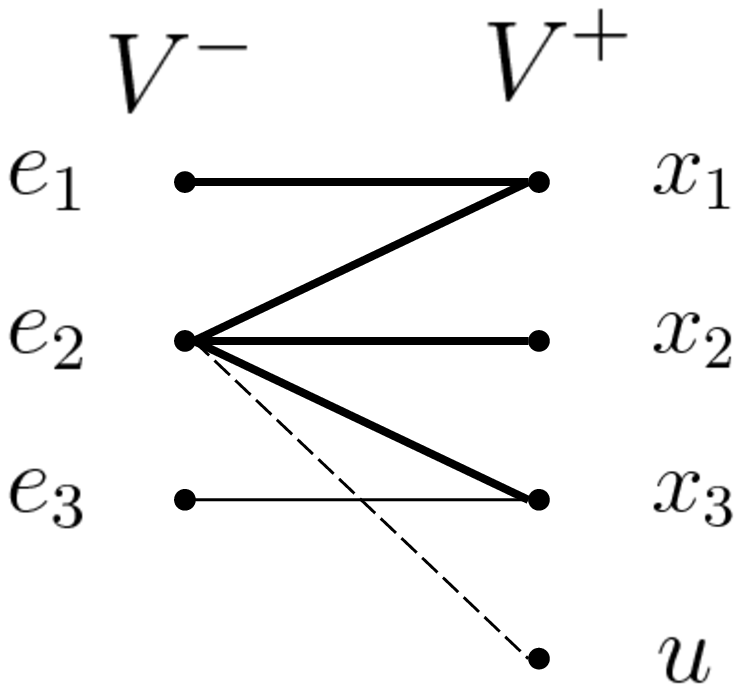}
    \hspace{1.6cm} (a) 
    \end{minipage}
    \begin{minipage}{0.5\hsize}
    \centering
    \includegraphics[width=3.5cm]{./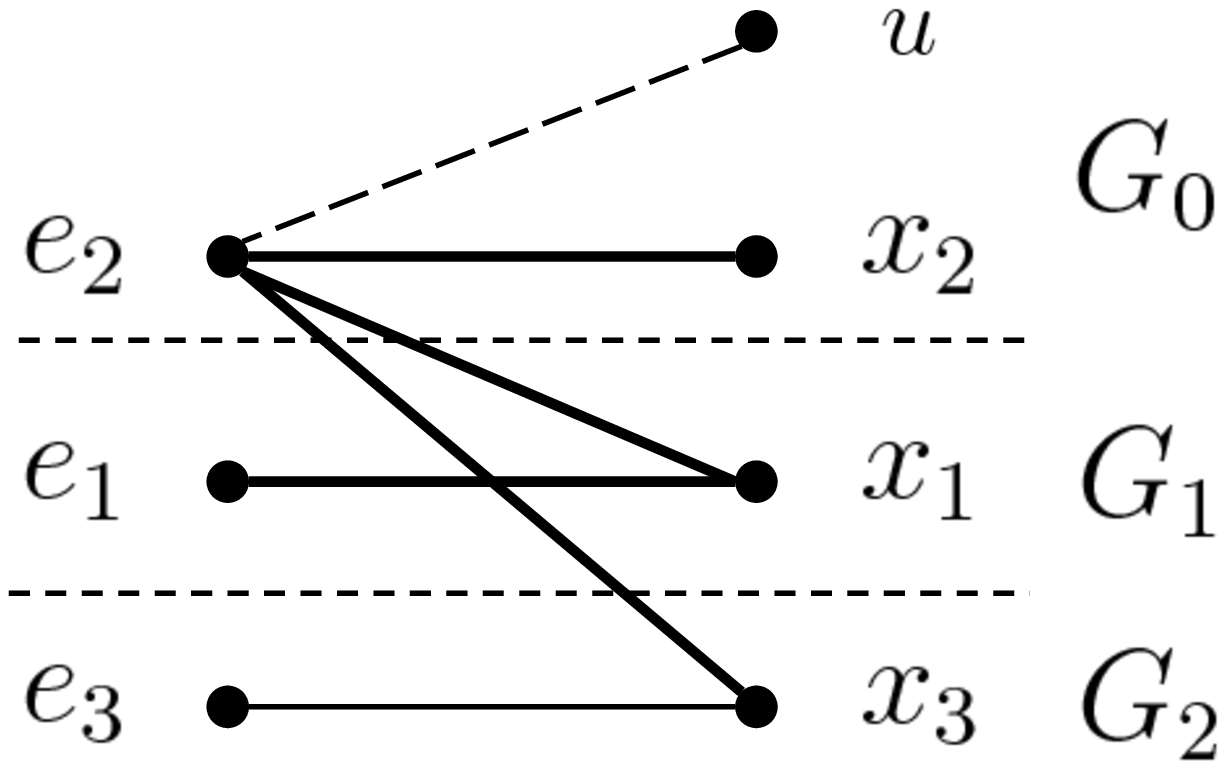}
    \hspace{1.6cm} (b) 
    \end{minipage}
    \end{tabular}
    \caption{(a) Bipartite graph representation of the descriptor system (\ref{eq:examplesystem}) and (b) its DM decomposition. Bold edges represent s-arcs, and dashed edges represent input edges.}
    \label{fig:dmdecomp}
\end{figure}
\subsection{Analysis of the MCP0}
In this subsection, we prove that in the MCP0 case, the minimum number of input nodes for descriptor system (\ref{eq:descriptor}) coincides with that for LTI system (\ref{eq:lti}).
\begin{theorem}[minimum input theorem]
\label{thm:mit}
Suppose that $\nu(G_{A-sF}) = n$, that is, system (\ref{eq:descriptor}) is solvable.
Then, the minimum number of inputs $n_D$ for system (\ref{eq:descriptor}) to be structurally controllable is
\begin{align}
    n_D = \max \{ n - \nu(G_A) , 1 \}, \label{eq:mdns}
\end{align}
where $G_A = (X,V^-;E_A)$.
\end{theorem}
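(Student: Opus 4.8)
The plan is to establish the two inequalities $n_D\ge\max\{n-\nu(G_A),1\}$ and $n_D\le\max\{n-\nu(G_A),1\}$ from the graph criterion in Proposition~\ref{prop:sc}. Since solvability is assumed, its condition~1) holds automatically, so a generic $B$ with $m$ columns makes (\ref{eq:descriptor}) structurally controllable exactly when condition~2), $\nu(G_{[A\mid B]})=n$, and condition~3), that no consistent DM component of the full bipartite graph $G$ contains an s-arc, both hold. For the lower bound, note that in $G_{[A\mid B]}=(X\cup U,V^-;E_A\cup E_B)$ a matching meets each vertex of $U$ at most once, so it decomposes into a matching of $G_A$ plus at most $m$ edges of $E_B$; hence $\nu(G_{[A\mid B]})\le\nu(G_A)+m$, and condition~2) forces $m\ge n-\nu(G_A)$. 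Together with $m\ge 1$ (MCP0 ranges over matrices with at least one column), this gives $n_D\ge\max\{n-\nu(G_A),1\}$.

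For the matching upper bound I would exhibit a generic $B$ with $m=\max\{n-\nu(G_A),1\}$ columns satisfying conditions~2) and~3). Fix a maximum matching $M_A$ of $G_A$ and extend it to a perfect matching $\hat M$ of $G_{A-sF}$ (possible by solvability); the $n-\nu(G_A)$ edges of $\hat M\setminus M_A$ are necessarily s-arcs joining the $M_A$-unmatched columns to the $M_A$-unmatched rows. I would attach input vertex $u_k$ to the $M_A$-unmatched row covered by the $k$-th of these s-arcs; then $M_A$ together with the edges $(u_k,\cdot)$ is a perfect matching of $G_{[A\mid B]}$, so condition~2) holds. When $\nu(G_A)=n$ there are no such rows and a single input vertex is introduced with no assignment yet. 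Because adding further edges from the input vertices can only enlarge a matching, I would then, without risk to condition~2), additionally join (say) $u_1$ to one row of each DM s-component of $G_{A-sF}$ that is maximal in the DM partial order, the aim being that in the DM decomposition of $G$ every s-arc is swept into the inconsistent component $G_0$, which is precisely condition~3). (The example around (\ref{eq:examplesystem}) shows this extra wiring is genuinely needed: attaching the input only to the row of the forced s-arc leaves the consistent block $[a_1-sf_1]$ intact, whereas also attaching it to the row of that block makes the system structurally controllable.)

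The heart of the proof — and the step where the cactus construction of \cite{liu2011controllability} cannot be imitated, so that the DM decomposition \cite{murota00} must be used instead — is verifying that this $B$ satisfies condition~3). The required structural lemma concerns how the DM decomposition of $G_{A-sF}$ is reshaped when the input edges are attached: one must show that the columns of $X$ left unmatched in $G$ (the $M_A$-unmatched columns, which lie inside $G_0$) reach, along $M$-alternating paths, the input vertices $u_k$, hence their extra neighbours, and hence — via the alternating paths that witness the DM partial order — every DM s-component lying below a maximal one; and conversely that the input vertices spent on covering the unmatched rows cannot smuggle an s-arc back into a consistent component. A subtlety that makes this nontrivial is that $n-\nu(G_A)$ can be strictly smaller than the number of maximal DM s-components — indeed zero when $\nu(G_A)=n$ while $F\ne 0$ — so the argument must exploit that a single input vertex, made adjacent to one row in each maximal s-component, absorbs them all at once in the canonical DM decomposition. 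Once this reachability bookkeeping is carried through, combining the two bounds yields (\ref{eq:mdns}).
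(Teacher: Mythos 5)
Your overall route is the same as the paper's (lower bound by splitting a maximum matching of $G_{[A\mid B]}$ into its $E_A$- and $E_B$-parts; upper bound by attaching the $n-\nu(G_A)$ inputs to the rows $V^-\setminus\partial^-M_A$ and then adding extra edges from an input into each maximal s-consistent component), but two points need fixing. First, the auxiliary claim that a fixed maximum matching $M_A$ of $G_A$ can be extended to a perfect matching of $G_{A-sF}$ is false. Take $n=3$ with $E_A$ joining $x_1$--$e_1$ and $x_2$--$e_2$, and $E_F$ joining $x_3$--$e_1$ and $x_1$--$e_3$: then $\nu(G_{A-sF})=3$, but the unique perfect matching of $G_{A-sF}$ is $\{x_1e_3,\,x_3e_1,\,x_2e_2\}$, which does not contain the unique maximum matching $\{x_1e_1,\,x_2e_2\}$ of $G_A$. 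The slip is not load-bearing, because connecting each input to a distinct row of $V^-\setminus\partial^-M_A$ already gives $\nu(G_{[A\mid B]})=n$ without any extension, but as stated the step is wrong and should be deleted.

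Second, and more seriously, the verification of condition 3) of Proposition~\ref{prop:sc} --- which you yourself call the heart of the proof --- is only described, not carried out: the ``reachability bookkeeping'' along alternating paths from the $M_A$-exposed columns to the (matched) input vertices is exactly the nontrivial part, and your proposal ends where that work begins. The missing idea that collapses it is the matching-independence of the DM decomposition (Lemma~2.3.35 of \cite{murota00}, cited in Appendix~\ref{app:dmdecomp}): instead of analyzing the decomposition of $G$ with your matching $M_A\cup\{(u_k,\cdot)\}$, take as the maximum matching of $G$ a perfect matching of $G_{A-sF}$, which exists by the solvability assumption and uses no $E_B$ edge. Then $V^+\setminus\partial^+M=U$, every edge leaving an input node is a non-matching edge directed from $U$ into $V^-$ in the auxiliary graph, so each maximal s-consistent component that received an input edge --- and, via the directed paths witnessing $\preceq$, every s-consistent component below it --- is swallowed by $V_0$ and becomes inconsistent; no new s-arcs are created by $E_B$, so condition 3) follows immediately, including in the case $\nu(G_A)=n$ with a single input. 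With that observation (or with the alternating-path argument in your chosen matching actually written out) your argument closes; without it, there is a genuine gap at the decisive step.
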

\begin{proof}
If $G_A$ has a perfect matching, that is $\nu(G_A) = n$, then condition 2) in Proposition \ref{prop:sc} is automatically satisfied by using edges of $E_A$ in finding the maximum matching of $G_{\left[A\mid B\right]}$ for any $B$.
Moreover, adding a single input node $u$ and adding edges from $u$ to each s-consistent component, condition 3) in Proposition \ref{prop:sc} is satisfied. Thus, the minimum number of input nodes is $1$.

We assume that $G_A$ does not have perfect matching, that is $\nu(G_A) < n$.
Let $n_D$ be the number of input nodes that the associated system (\ref{eq:descriptor}) is structurally controllable and
\begin{align}
    n_D < n-\nu(G_A). \label{eq:ndineq}
\end{align}
From condition 2) in Proposition \ref{prop:sc}, $\nu(G_{\left[A \mid B\right]}) = n$ holds.
Let $M$ be a maximum matching of $G_{\left[A \mid B\right]}$, and then, let $M\setminus E_B$ be a matching of $G_A$, and
\begin{align}
    |M\setminus E_B| \geq n - n_D. \label{eq:mineq}
\end{align}
From (\ref{eq:ndineq}) and (\ref{eq:mineq}), $|M\setminus E_B| > \nu(G_A)$. This contradicts the maximality of $\nu(G_A)$. Thus, $n_D \geq n - \nu(G_A)$.
Let $M_A$ be the maximum matching of $G_A$ and a number of input nodes be $n-\nu(G_A)$.
Connecting each input node to a node of $V^{-} \setminus \partial^{-} M_A$, $\nu(G_{\left[A \mid B \right]}) = n$ is satisfied, \textit{i.e.}, condition 2) in Proposition~\ref{prop:sc} holds. In addition, by connecting input nodes to s-consistent DM components of $G_{A-sF}$, 3) in Proposition~\ref{prop:sc} is satisfied without increasing the number of input nodes.
The reason is as follows. 
From assumption (\ref{eq:solvability}) and the structure of $G$, we can use the perfect matching of $G_{A-sF}$ as the maximum matching of $G$ in the DM decomposition of $G$. Hence, it follows from (\ref{eq:defgraph}) that $V^+\setminus \partial^+M$ in the construction of the DM decomposition is just $U$. Thus, all s-consistent DM components of $G_{A-sF}$ become inconsistent DM components, and 3) in Proposition~\ref{prop:sc} is satisfied.

Therefore, system (\ref{eq:descriptor}) constructed in the above manner is structurally controllable. \qed
\end{proof}

The statement of Theorem~\ref{thm:mit} for descriptor system (\ref{eq:descriptor}) with generic matrices coincides with that of the minimum input theorem in Section~II in \cite{liu2011controllability} for LTI system (\ref{eq:lti}) with generic matrices.
In \cite{liu2011controllability}, the corresponding proposition was proved using graphical concepts such as cacti. However, we cannot use the concepts for descriptor system (\ref{eq:descriptor}), because (\ref{eq:descriptor}) cannot be represented by the directed graph as mensioned in Section~\ref{sec:prb}.
Instead, we use condition 2) in Proposition~\ref{prop:sc}, which denotes the controllability condition using the maximum bipartite matching.
Moreover, the DM decomposition can be constructed in $O(|E||V|^{1/2})$ time \cite{murota00}. Thus, we can construct $B$ as the solution to MCP0 for descriptor system (\ref{eq:descriptor}) with generic matrices in polynomial time.

To demonstrate how to use this result, we consider autonomous descriptor system (\ref{eq:descriptor}) with $A$ and $F$ defined in (\ref{eq:examplesystem}). Recall that $\nu(G_{A-sF}) = 3$. That is, condition 1) in Proposition~\ref{prop:sc} holds.
Then, the maximum matching of $G_A$ is given as $M = \{a_1=(x_1,e_1),a_4=(x_3,e_3)\}$.
Thus, $\nu(G_A) = 2$ and it follows from (\ref{eq:mdns}) that the minimum node size is $n_D = 1$. From the proof of Theorem~\ref{thm:mit}, we must connect an input node $u$ to the left unmatched node $e_2$. Then, condition 2) in Proposition~\ref{prop:sc} holds.
Moreover, to satisfy condition 3) in Proposition~\ref{prop:sc}, we must add an edge between $u$ and $e_1$ because the DM component $G_2$ of $G_{A-sF}$ that contains $e_1$ has s-arcs (Fig.~\ref{fig:dmdecomp2} (a)). Thus, we have a solution to MCP0 as 
\begin{align*}
    B = \begin{bmatrix}
        b_1 & b_2 & 0
    \end{bmatrix}^\top.
\end{align*}
Then, the DM decomposition of the modal controllability matrix
\begin{align*}
 D(s) = [ A - s F \mid B ] = 
    \begin{bmatrix}
        a_1 - sf_1 &  &  & b_1 \\
        a_3 - sf_2 & -sf_3 & -sf_4 & b_2 \\
         &  & a_4 &
    \end{bmatrix}
\end{align*}
is given as
\begin{align*}
     PD(s)Q =
     \begin{bmatrix}
        b_2 & -sf_3 & a_2 - s f_2 & -sf_4  \\
        b_1 & & a_1-sf_1 & \\
        & & & a_4
    \end{bmatrix},
\end{align*}
where $P$ and $Q$ are permutation matrices.
The corresponding DM decomposition of the bipartite graph is shown in Fig.~\ref{fig:dmdecomp2} (b). This shows that the DM components are $G_0 , G_1$, and the only consistent DM component $G_1$ has no s-arcs, whereas $G_1$ in Fig.~\ref{fig:dmdecomp} has an s-arc.
Thus, from Proposition~\ref{prop:sc}, system (\ref{eq:descriptor}) is structurally controllable.

\begin{figure}
    \centering
    \begin{tabular}{c}
    \begin{minipage}{0.5\hsize}
    \centering
    \includegraphics[width=3.0cm]{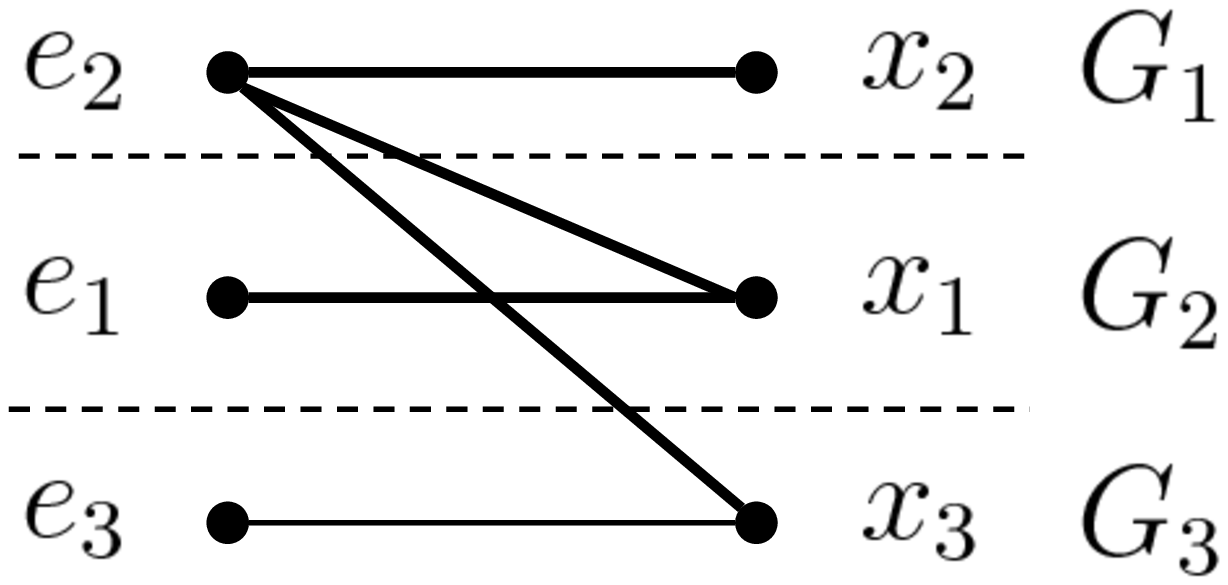}
    \hspace{1.6cm} (a) 
    \end{minipage}
    \begin{minipage}{0.5\hsize}
    \centering
    \includegraphics[width=3.0cm]{./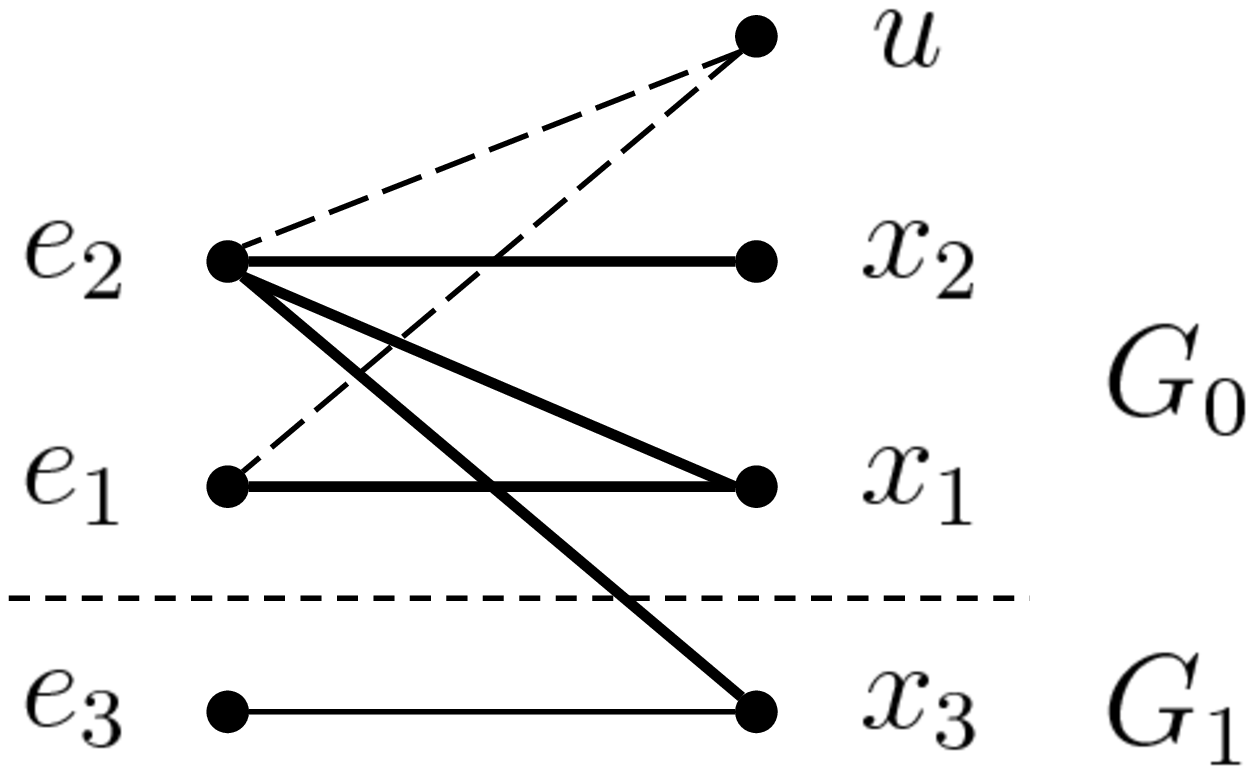}
    \hspace{1.6cm} (b) 
    \end{minipage}
    \end{tabular}
    \caption{(a) DM decomposition of $G_{A-sF}$ and (b) DM decomposition of the descriptor system configured as the solution to MCP0. Bold edges represent s-arcs, and dashed edges represent input edges.}
    \label{fig:dmdecomp2}
\end{figure}

\subsection{Intractability of MCP1 for descriptor systems}
We prove that the set cover problem, which is an NP-hard problem, can be reduced to MCP1 for an instance of descriptor system (\ref{eq:descriptor}) with generic matrices.
This means that MCP1 for descriptor system (\ref{eq:descriptor}) with generic matrices is NP-hard.

To this end, we first define additional terms of the DM components of descriptor system (\ref{eq:descriptor}) and discuss permissible inputs under the assumption of MCP1 that only one state node can be connected to an edge from each input, as illustrated in the right of Fig.~\ref{fig:comparisonMCP01}.

The maximal consistent DM s-component is a consistent DM s-component $G_k$ of $G_{A-sF}$ such that no other DM s-components are greater than $G_k$ related to the partial order $\preceq$. For example, $G_1$ in Fig.~\ref{fig:dmdecomp} (b) is a maximal consistent DM s-component.

A subset $S \subseteq V^-$ is called an \textit{input configuration} if system (\ref{eq:descriptor}) becomes structurally controllable by connecting the inputs to the nodes specified by $S$. An input configuration $S$ is a feasible solution to MCP1 for descriptor system (\ref{eq:descriptor}).
A similar yet slightly different concept of the input configuration can be found in \cite{pequito2015framework} for LTI system (\ref{eq:lti}) with generic matrices.

From the graphical condition of structural controllability in Proposition \ref{prop:sc}, we obtain the following theorem on the input configuration. This theorem clarifies the structure of the input configuration in MCP1 and reduces the set cover problem to MCP1 for an instance of system (\ref{eq:descriptor}).
\begin{theorem}
\label{thm:inputconfig}
    A subset $S \subseteq V^-$ is an input configuration \textit{if and only if} 
    \begin{align*}
        S \supseteq U_M \cup C,
    \end{align*}
    where $U_M$ is the set of left unmatched nodes of a maximum matching $M$ of $G_A$, \textit{i.e.}, $U_M = V^- \setminus \partial^- M$. The symbol $C$ is a set whose nodes can reach all maximal s-consistent components. That is, for any maximal s-consistent component $G_i$, there exists a node $c\in C$ such that $[c] \preceq G_i$. Here, $[c]$ denotes the DM component of $G$ to which $c$ belongs. 
\end{theorem}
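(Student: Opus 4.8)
The plan is to derive the characterization directly from the graph-theoretic criterion of Proposition~\ref{prop:sc}. Since system~(\ref{eq:descriptor}) is assumed solvable, condition~1) of Proposition~\ref{prop:sc} always holds, so for the generic diagonal matrix $B$ with support $S$ the set $S$ is an input configuration \emph{if and only if} conditions~2) and 3) of Proposition~\ref{prop:sc} both hold. I would prove two equivalences: ``there is a maximum matching $M$ of $G_A$ with $U_M\subseteq S$'' is equivalent to condition~2), and ``there is a set $C$ of the form described in the statement with $C\subseteq S$'' is equivalent to condition~3). Combining these with the monotonicity observation that attaching extra input nodes can only increase $\nu(G_{[A\mid B]})$ and can only enlarge the inconsistent horizontal component $G_0$ of $G$ --- hence can never falsify either condition --- then yields the theorem.

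For condition~2): with $B=\mathrm{diag}(b)$ every input node has exactly one neighbour in $V^-$, so any matching of $G_{[A\mid B]}$ saturating $V^-$ splits into a matching of $G_A$ and a set of $E_B$-edges saturating a subset of $S$. Hence $\nu(G_{[A\mid B]})=n$ holds exactly when $G_A$ admits a matching covering $V^-\setminus S$. By the standard augmenting-path argument --- every matching of a bipartite graph extends to a maximum matching saturating at least the same vertices of $V^-$ --- this is in turn equivalent to the existence of a maximum matching $M$ of $G_A$ with $V^-\setminus\partial^-M=U_M\subseteq S$. This proves the $U_M$-part in both directions and shows how to recover a suitable $M$ from a given input configuration.

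For condition~3), the starting point is the same fact used in the proof of Theorem~\ref{thm:mit}: under solvability a perfect matching of $G_{A-sF}$ is also a maximum matching of $G$ leaving every input node unmatched, so the horizontal component $G_0$ of $G$ consists precisely of the vertices reached from the input nodes by $M$-alternating paths. The crux is to show that attaching an input to a node $c$ absorbs into $G_0$ the DM component $[c]$ together with every DM component reachable from $c$ by an $M$-alternating path, and that this family is an order ideal of the DM poset. Granting this, condition~3) holds exactly when every s-consistent DM component gets absorbed; and since absorbing a component absorbs its whole order ideal, this is equivalent to absorbing every \emph{maximal} s-consistent DM component, which is exactly the reachability requirement defining $C$. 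As before, this yields both implications: $S\supseteq C$ forces condition~3), while from an input configuration one extracts, one maximal s-consistent component at a time, a set $C\subseteq S$ of the required form.

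Putting the two equivalences together with the monotonicity remark shows that $S$ is an input configuration if and only if $S\supseteq U_M\cup C$ for suitable $M$ and $C$, which is the claim. I expect the main difficulty to be the condition~3) step: one must argue, from the explicit construction of the DM decomposition under solvability, that attaching diagonal inputs merges into $G_0$ \emph{exactly} the claimed order ideal of DM components, that the remaining square blocks keep their internal structure (so that an s-arc survives inside a consistent DM component of $G$ precisely when the corresponding component of $G_{A-sF}$ is not absorbed), and that it suffices to reach the maximal s-consistent components rather than all s-consistent components.
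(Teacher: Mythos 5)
Your proposal takes essentially the same route as the paper: reduce the definition of input configuration to conditions 2) and 3) of Proposition~\ref{prop:sc}, extract $U_M$ from the maximum-matching requirement $\nu(G_{[A\mid B_S]})=n$, and extract $C$ by observing that attaching diagonal inputs merges into the inconsistent component $G_0$ of $G$ exactly the DM components of $G_{A-sF}$ reachable from the chosen nodes (an order ideal), so that condition 3) reduces to reaching every maximal s-consistent component. Your handling of the two delicate points --- the augmenting-path argument producing a maximum matching $M$ with $U_M\subseteq S$, and the order-ideal/maximality argument --- is if anything more explicit than the paper's own proof, and your reachability reading agrees with the paper's intended meaning of $\preceq$.
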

\begin{proof}
    Let $B_S$ be a generic diagonal matrix corresponding to the set $S$.
    
    ($\Rightarrow$)
    From the definition of $S$, the system (\ref{eq:descriptor}) with $B=B_S$ is structurally controllable. Then, $\nu(G_{[A\mid B_S]}) = n$ holds from condition 2) in Proposition~\ref{prop:sc}.
    If $\nu(G_A) < n$, there exists a maximum matching $M$ of $G_A$ and $V^- \setminus \partial^- M$ is unmatched nodes. This implies that $S$ must contain $V^-\setminus \partial^-M$, and thus $S \supseteq U_M$.
    
    To show that $S$ must contain $C$, we provide the relationship between the DM components of $G$ and $G_{A-sF}$, where $G$ and $G_{A-sF}$ are defined in the beginning of Section~\ref{sec:analysis}. Let $\{G_i=(V_i^+,V_i^-;E_i)\}$ be the DM components of $G_{A-sF} = (X,V^-;E_A\cup E_F)$. From assumption (\ref{eq:solvability}), condition 1) in Proposition~\ref{prop:sc} holds. Thus, there are no inconsistent components $G_0,G_\infty$.
    Then, the nodes sets $V_0',V_1',V_2',\dots,V_l'$ of the DM components of $G$ are
    \begin{align*}
        \{V_t'\}  & := \{V_k^-\cup V_k^+ \mid e_i \not \in V_k^- \cap S, i = 1,\dots,n \},  \\
        V_0' & := \bigcup_{i=1}^n \{V_k^- \cup V_k^+ \mid e_i \in V_k^- \cap S \} \cup \{ u_1 , \dots , u_{|S|} \}.
    \end{align*}
    
    That is, the DM component that contains the nodes of $S$ will be merged with the inconsistent DM component $G_0'$, and the other components remain consistent (Fig.~\ref{fig:addingB}).
    For $G_i$ to be merged to an inconsistent component $G_0'$ in $G$, an input node must exist and be connected to $G_i$ that is either $G_i$ or a larger (in the sense of the partial order of DM decomposition) consistent component.
    Combining this with condition 3) in Proposition~\ref{prop:sc}, we have $S\supseteq C$.
    
    \begin{figure}
        \centering
        \includegraphics[width=5cm]{./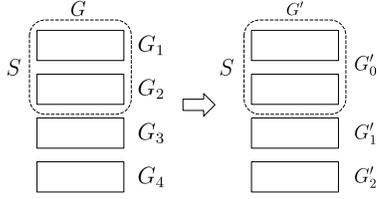}
        \caption{DM components of $G$ and $G'$.}
        \label{fig:addingB}
    \end{figure}
    ($\Leftarrow$)
    Let $S$ contain $U_M \cup C$, and let $M$ be a maximum matching of $G_A$.
    Each input node specified by $S$ connects to the left-unmatched nodes of $G_A$ in the maximum matching $M$. If we choose edges from $A$ and $B_S$ in finding a maximum matching of $G_{[A\mid B_S]}$, $\nu(G_{[A\mid B_S]}) = n$ holds. That is, condition 2) in Proposition~\ref{prop:sc} is satisfied.
    Moreover, from the definition of $C$, all consistent s-components of $G_{A-sF}$ become inconsistent components in the DM decomposition of $G$. This means that no consistent components contain s-arcs; thus, descriptor system (\ref{eq:descriptor}) with $B = B_S$ is structurally controllable. That is, $S$ is an input configuration.
    \qed
\end{proof}

We briefly discuss the set cover problem to prove the intractability of MCP1 for descriptor system (\ref{eq:descriptor}) with generic matrices.
Consider $W := \{w_1,\dots,w_n\}$, and $S_1,\dots,S_k \subseteq W$ with $\bigcup_{k} S_k = W$.
The set cover problem is a problem that finds the minimum size index $I\subseteq \{1,\dots,k\}$ such that $\{S_i\}_{i\in I}$ covers $W$.
\begin{align}
\label{prb:setcover}
    \begin{cases}
        \mathrm{minimize} & \quad |I| \\
        \mathrm{subject\ to} & \quad \bigcup_{i\in I} S_i = W,\quad I \subseteq \{1,\dots,k\}.
    \end{cases}
\end{align}
The set cover problem is NP-hard as shown in Theorem~15.24 in \cite{korte2012combinatorial}.

The computational difficulty of MCP1 for descriptor system (\ref{eq:descriptor}) with generic matrices can be proven 
using Theorem~\ref{thm:inputconfig}.
\begin{theorem}[Intractability of MCP1 for descriptor system]
\label{thm:mcp1}
    MCP1 for descriptor system (\ref{eq:descriptor}) with generic matrices includes the set cover problem. That is, its computational complexity is NP-hard.
\end{theorem}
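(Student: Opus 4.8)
The plan is to exhibit a polynomial-time reduction from the set cover problem (\ref{prb:setcover}) to MCP1 for a suitable family of structural descriptor systems, using the characterization of input configurations in Theorem~\ref{thm:inputconfig}. Given a set cover instance with universe $W=\{w_1,\dots,w_q\}$ and sets $S_1,\dots,S_k\subseteq W$ with $\bigcup_i S_i=W$, I would build generic matrices $F,A\in\mathcal{G}^{n\times n}$ with $n=q+k$ so that the system is solvable and its DM decomposition mirrors the instance: $q$ maximal s-consistent DM components $T_1,\dots,T_q$ (one per element of $W$) and $k$ ``selector'' components $P_1,\dots,P_k$ (one per set), with $P_i\preceq T_j$ precisely when $w_j\in S_i$ and no other order relations. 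Then Theorem~\ref{thm:inputconfig} forces a minimum input configuration to ``cover'' every $T_j$ by choosing nodes in the $P_i$'s, which is exactly a minimum set cover.

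Concretely, index the first $q$ rows/columns by $W$ and the last $k$ by the sets. I would set $F_{jj}\neq0$ for $j=1,\dots,q$ and $F$ zero otherwise; $A_{\ell\ell}\neq0$ for all $\ell=1,\dots,n$; and $A_{q+i,\,j}\neq0$ exactly when $w_j\in S_i$; all nonzero entries being independent parameters. First I would verify solvability: the support of $A-sF$ contains the full diagonal, so $G_{A-sF}$ has a perfect matching and $\nu(G_{A-sF})=n$, meeting the hypothesis of Proposition~\ref{prop:sc}. Next I would identify the DM decomposition of $G_{A-sF}$: with the diagonal perfect matching, the only off-diagonal edges are the $A_{q+i,j}$ entries, each joining the column of $T_j$ to the row of $P_i$; hence every $T_j$ and every $P_i$ is its own DM component, $P_i\preceq T_j\iff w_j\in S_i$ are the only strict order relations, and $G_0=G_\infty=\emptyset$. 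Since the only s-arcs are the diagonal $F_{jj}$, the s-consistent DM components are exactly $T_1,\dots,T_q$, each of which is maximal.

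With this structure I would apply Theorem~\ref{thm:inputconfig}. Because $A$ alone has a perfect matching (its diagonal), $G_A$ has a perfect matching and every maximum matching is perfect, so $U_M=\emptyset$. Thus $S\subseteq V^-$ is an input configuration if and only if $S$ contains a set $C$ that reaches every maximal s-consistent component, i.e.\ for each $j$ the set $C$ contains a row node lying in $T_j$ or in some $P_i$ with $w_j\in S_i$. Choosing the row node of $P_i$ covers exactly the elements of $S_i$, while choosing the row node of $T_j$ covers only $\{w_j\}$; hence the minimum size of an input configuration equals the minimum number of sets among $S_1,\dots,S_k$ and the singletons $\{w_1\},\dots,\{w_q\}$ needed to cover $W$. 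Since $\bigcup_i S_i=W$, any singleton in a cover can be replaced by some $S_i$ containing it, so this quantity equals the optimal value of the set cover instance. Therefore the optimal value $\|b\|_0$ of MCP1 on the constructed descriptor system equals the minimum set cover size; as the construction is computable in polynomial time, the NP-hardness of set cover transfers to MCP1 for descriptor systems with generic matrices.

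The step I expect to require the most care is the verification that the claimed DM decomposition of $G_{A-sF}$ is exactly as asserted: that the selector edges $A_{q+i,j}$ create the relations $P_i\preceq T_j$ without merging any components, and that no spurious s-consistent components or order relations arise. The second delicate point is the bookkeeping establishing that the singleton options in Theorem~\ref{thm:inputconfig} can never improve on an optimal cover by the $S_i$'s. A third point, which I would handle by simply invoking the ($\Leftarrow$) direction of Theorem~\ref{thm:inputconfig} rather than reproving it, is that connecting a single input to a node of $P_i$ indeed renders every $T_j$ with $w_j\in S_i$ inconsistent.
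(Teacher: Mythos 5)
Your overall plan (reduce set cover via Theorem~\ref{thm:inputconfig}, with $U_M=\emptyset$ so that only the covering set $C$ matters) is the same as the paper's, but your gadget wires the coupling in the wrong direction, and this breaks the reduction. With $A_{q+i,\,j}\neq 0$ for $w_j\in S_i$ and $F$ supported only on the diagonal of the element block, every element equation reads $F_{jj}\dot x_j = A_{jj}x_j$: a self-contained scalar ODE, so $x_j$ cannot be influenced by an input attached to any other row. Concretely, for $W=\{w_1,w_2\}$, $S_1=\{w_1,w_2\}$ your system is $f_1\dot x_1=a_1x_1$, $f_2\dot x_2=a_2x_2$, $0=a_3x_3+c_1x_1+c_2x_2$; an input on the selector row leaves $[\,A-zF\mid B\,]$ rank-deficient at $z=a_j/f_j$, so the MCP1 optimum is $2$ (one input per element row), while the set cover optimum is $1$. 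In general your instance always has optimum $|W|$ regardless of the set system, so your key claim that the minimum input configuration size equals the minimum cover size is false.

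The source of the error is the direction of the order relation you use when applying Theorem~\ref{thm:inputconfig}. In your DM picture the element components $T_j$ are the source components ($P_i\preceq T_j$, nothing above $T_j$), and you take the printed inequality ``$[c]\preceq G_i$'' literally, so that a node of $P_i$ ``covers'' $T_j$. But attaching an input to a row $c$ absorbs into the inconsistent part exactly the components reachable from $[c]$ in the auxiliary digraph, i.e.\ those $\preceq[c]$ in the appendix's order; hence a maximal s-consistent component $G_i$ is neutralized only by some $c$ with $G_i\preceq[c]$. This is what the verbal phrase ``nodes can reach all maximal s-consistent components'' means, and it is what the paper's own MCP0 example enforces when it is forced to attach the input to $e_1$ (the printed inequality in Theorem~\ref{thm:inputconfig} appears to have its direction reversed relative to the appendix's order convention, which evidently misled you). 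Since nothing lies above your $T_j$, the set $C$ must contain one row per element, and the selector rows are useless. The paper's gadget is the mirror image of yours: the cross nonzeros sit in the element rows and selector columns ($F_{w_j,s_i}\neq 0$ iff $w_j\in S_i$, selector rows purely algebraic), so the selector components are above the element components and a single input on a selector row controls all of its elements; transposing your cross block (in $A$ or $F$) repairs your construction, after which your bookkeeping about singletons and $U_M=\emptyset$ goes through.
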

\begin{proof}
We show that given $W = \{w_1,\dots,w_n\}$, $S_1,\dots,S_k \subseteq W$, we can define a specific autonomous descriptor system (\ref{eq:descriptor}) with $B=0$ corresponding to the set cover problem.

To this end, we first define a bipartite representation of the system. Its DM components correspond to the elements of $W$ and $S_1,\dots,S_k$. Moreover, the order between a DM component associated with an element $w_i$ of $W$ and a subset $S_j$ can be regarded as the inclusion between $w_i$ and $S_j$.
The nodes sets $V^+$ and $V^-$ are defined as $V^+ = \{w^+_1,\dots,w^+_n\} \cup \{s^+_1,\dots,s^+_k\}, V^- = \{w^-_1,\dots,w^-_n\} \cup \{s^-_1,\dots,s^-_k\}$.
Further, we define $E_A = \{(s^-_i,s^+_i)\mid i=1,\dots,k\} \cup \{(w^-_i,w^+_i)\mid i=1,\dots,n\}$ and $E_F = \{(w^-_i,s^+_j) \mid w_i \in S_j \} \cup \{(w^-_i,w^+_i)\mid i=1,\dots,n\}$.
Then, the DM components of this bipartite representation are the subgraph induced by $\{w^+_i,w^-_i\} \ (i=1,\dots,n)$ or $\{s^+_j,s^-_j\} \ (j=1,\dots,k)$. The DM components induced by $\{w^+_i,w^-_i\}$ and $\{s^+_j,s^-_j\}$ correspond to the element $w_i$ of $W$ and the subset $S_j$, respectively.
Moreover, if $w_i \in S_j$, the DM component induced by $\{w^+_i,w^-_i\}$ is less than the DM component induced by $\{s^+_j,s^-_j\}$ in the sense of the order of DM decomposition.
It follows from (\ref{eq:defgraph}) that the bipartite graph $(V^+,V^-;E_A\cup E_F)$ defines an instance of descriptor system (\ref{eq:descriptor}) with $B=0$.

Theorem \ref{thm:inputconfig} implies that the number of input configuration $S$ is 
\begin{align}
    |S| \geq |U_M \cup C|. \label{eq:configineq}
\end{align}
In this case, because we can take only a perfect matching from $E_A$ as a maximum matching $M$, $U_M$ is fixed to $\emptyset$. Combining this with (\ref{eq:configineq}), we have $|S| \geq |C|$.
Moreover, in the MCP1 case, we select $S$ as $C$ because we must select the minimum size of $S$, which corresponds to a feasible solution to MCP1.
Accordingly, MCP1 is equivalent to the following problem:
\begin{align}
\begin{cases}
    \text{minimize}  &  |C|\\
    \text{subject to} & \text{For all s-consistent component $G_k$, } \\
                      & \text{there exists $c\in C$ such that $[c] \preceq G_k$, and} \\
                      & C \subseteq V^-.
\end{cases} \label{prb:setcoverdescriptor} 
\end{align}
From the construction of $E_F$, the s-consistent component corresponds exactly to the element of $W$, and $C$ is chosen from any consistent components containing $s$ (Fig.~\ref{fig:setcover}).
Thus, problem (\ref{prb:setcoverdescriptor}) is equivalent to problem (\ref{prb:setcover}). \qed
\end{proof}

We explain Theorem~\ref{thm:mcp1} using Fig.~\ref{fig:comparisonMCP1}.
In MCP1 for LTI system (\ref{eq:lti}), inputs must be connected to root SCCs.
If a graph representation of system (\ref{eq:lti}) is illustrated in the left of Fig.~\ref{fig:comparisonMCP1}, then we must add two inputs.
However, in MCP1 for descriptor system (\ref{eq:descriptor}), if the maximal DM component contains no s-arc, and therefore, it must not be connected from inputs. Thus, we have to choose minimum inputs that cover all DM components that include s-arcs. Thus, the problem is more complicated.
In \cite{clark2017input}, system (\ref{eq:descriptor}) was restricted to the form given in Eq.~(\ref{eq:clark}) and a polynomial-time solution with a matroid intersection was proposed; however, our result shows that it is difficult to solve in polynomial time for a more general case.

Consider the following instance of the set cover problem: $W = \{w_1,w_2,w_3,w_4\}$ and $S_1 = \{w_1,w_2,w_4\} , S_2 = \{w_1,w_3\} , S_3 = \{w_3,w_4\}$. Then, the solution to the set cover problem (\ref{prb:setcover}) is $ I = \{1,3\}$ because $S_1 \cup S_3 = W$, and we cannot select a better $I$.
Moreover, descriptor system (\ref{eq:descriptor}) defined in the proof of Theorem~\ref{thm:mcp1} is 
\begin{align*}
    A = &
    \begin{blockarray}{*{7}{c} l}
    \begin{block}{*{7}{>{$\footnotesize}c<{$}} l}
      $w_1^+$ & $w_2^+$ & $w_3^+$ & $w_4^+$ & $s_1^+$ & $s_2^+$ & $s_3^+$ \\
    \end{block}
    \begin{block}{[*{7}{c}]>{$\footnotesize}l<{$}}
      a_1 & &  &  &  &  &  & $w_1^-$ \\
      & a_2 &  &  &  &  &  & $w_2^-$ \\
      &  & a_3 &  &  &  &  & $w_3^-$ \\
      &  &  & a_4 &  &  &  & $w_4^-$ \\
      &  &  &  & a_5 &  &  & $s_1^-$ \\
      &  &  &  &  & a_6 &  & $s_2^-$ \\
      &  &  &  &  &  & a_7 & $s_3^-$ \\
    \end{block}
  \end{blockarray}, \\
  F = &
    \begin{blockarray}{*{7}{c} l}
    \begin{block}{*{7}{>{$\footnotesize}c<{$}} l}
      $w_1^+$ & $w_2^+$ & $w_3^+$ & $w_4^+$ & $s_1^+$ & $s_2^+$ & $s_3^+$ \\
    \end{block}
    \begin{block}{[*{7}{c}]>{$\footnotesize}l<{$}}
    f_1 &  &  &  &  f_5 & f_8 &  & $w_1^-$ \\
      & f_2 &  &  & f_6 &  &  & $w_2^-$ \\
      &  & f_3 &  &  & f_9 & f_{10} & $w_3^-$ \\
      &  &  & f_4 & f_7 &  & f_{11} & $w_4^-$ \\
      &  &  &  &  &  &  & $s_1^-$ \\
      &  &  &  &  &  &  & $s_2^-$ \\
      &  &  &  &  &  &  & $s_3^-$ \\
    \end{block}
  \end{blockarray}.
\end{align*}
The bipartite graph representation of this system is shown in Fig.~\ref{fig:setcover}, whose nodes sets $V^+$ and $V^-$ are defined as $V^+ := \{s^+_1,s^+_2,s^+_3\} \cup \{w^+_1,\dots,w^+_4\}$ and $V^- := \{s^-_1,s^-_2,s^-_3\} \cup \{w^-_1,\dots,w^-_4\}$. The DM components $G_4,G_5,G_6,G_7$ and $G_1,G_2,G_3$ correspond to elements of $W = \{w_1,w_2,w_3,w_4\}$ and subsets $\{S_1,S_2,S_3\}$, respectively.
If we choose $S = \{s_1^-,s_3^-\}\subseteq V^-$, $S$ is an input configuration of the system because all s-consistent components $\{G_4,G_5,G_6,G_7\}$ can be reached from nodes of $S$.

\begin{figure}[t]
    \centering
    \includegraphics[width=5.0cm]{./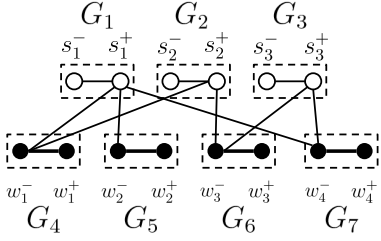}
    \caption{Translation from a set cover problem ($W = \{w_1,w_2,w_3,w_4\}$ and $S_1 = \{w_1,w_2,w_4\}, S_2 = \{w_1,w_3\}$, and $S_3 = \{w_3,w_4\}$) to MCP1 for an instance of descriptor system. Bold edges represent s-arcs. $G_4,\dots,G_7$ are the s-consistent components.}
    \label{fig:setcover}
\end{figure}

\begin{figure}[t]
    \centering
    \includegraphics[width=7cm]{./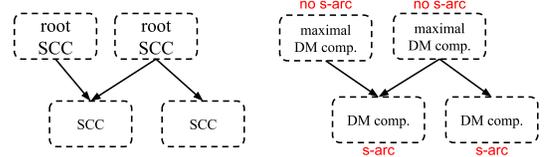}
    \caption{Comparison of MCP1 for LTI system (\ref{eq:lti}) (left) and descriptor system (\ref{eq:descriptor}) (right).}
    \label{fig:comparisonMCP1}
\end{figure}

\section{Conclusion} \label{sec:conclusion}

In this study, we extended the results of MCPs for LTI systems with generic matrices to descriptor systems using bipartite graph representation and DM decomposition.
We showed that the solution to MCP0 for the descriptor system was the same as MCP0 for the LTI system. However, the derivation is considerably different from that in \cite{liu2011controllability} using graph concepts such as cacti.
In fact, we used bipartite graph representation and DM decomposition for the descriptor system. This revealed that MCP0 for descriptor systems can also be computed in polynomial time.
In addition, we found that MCP1 for the descriptor system is NP-hard because the set cover problem can be reduced to MCP1 for an instance of the descriptor systems. This result is completely different from the MCP1 for LTI systems, which can be solved in polynomial time. 

In the case of MCPs for LTI systems, dealing with the system as a structural system allows MCPs, which were originally NP-hard problems \cite{olshevsky2015minimal}, to be solved in polynomial time \cite{liu2011controllability,pequito2015framework,olshevsky2015minimum}.
However, our result on MCP1 for descriptor systems showed that dealing with the systems as a structural system does not make the problem easier at all. Therefore, we need to limit the scope of the target system, similar to that in \cite{clark2017input}, to develop an exact algorithm for solving the MCP1.
In addition, an approximate algorithm for solving MCP1 for structural descriptor systems should be considered in future work.

In physical systems, the solutions to MCP0 and MCP1 for descriptor systems may not be applicable for descriptor systems. For instance, if an equation in the descriptor system represents a physical constraint, then there would be no physical meaning to adding external input to it. To avoid this, the introduction of forbidden nodes as constraints to MCP0 and MCP1, would be useful, as in \cite{olshevsky2015minimum}.

Furthermore, structural controllability requires that the system parameters be algebraically independent. This means that all non-zero system parameters are free, which may be a strong assumption for practical situations. To avoid this assumption, strong structural controllability has been proposed in \cite{mayeda1979strong}, and the input selection problem on strong structural controllability for descriptor systems has been studied in \cite{popli2019selective}. Further, a graph-theoretic analysis would be one of the future works for strong structural controllability.
\appendix

\subsection{DM decomposition}
\label{app:dmdecomp}
We introduce the algorithm of the DM decomposition for a bipartite graph $G = (V^+,V^-;E)$, which is mathematically derived from the distributive lattice determined by the minimizer of some submodular function (see \cite{murota00} for details).
We define $M$ as a maximum matching of $G$ and an auxiliary directed graph $\tilde{G}_M$ whose edges are oriented from $V^+$ to $V^-$ except for $M$.
Then, the DM decomposition algorithm is given as (Fig.~\ref{fig:dmdecompalgorithm}):
\begin{enumerate}
    \item Find a maximum matching $M$ on $G$.
    \item Construct an auxiliary directed graph $\tilde{G}_M$.
    \item $V_0 := \{ v\ \in V^+\cup V^- \mid \exists u\in V^+\setminus \partial^+ M \ u \to_{\tilde{G}_M} v \}$, where $u \to_{\tilde{G}_M} v$ indicates the existence of a directed path from $u$ to $v$ on $\tilde{G}_M$.
    \item $V_\infty := \{ v\ \in V^+\cup V^- \mid \exists u\in  V^-\setminus \partial^- M \ v\to_{\tilde{G}_M} u \}$.
    \item $G' := \tilde{G}_M \setminus (V_0\cup V_\infty)$.
    \item Let $V_k\ (k=1,\dots,b)$ be the strong components of $G'$ and undirected graph $G_k\ (k=0,1,\dots,b,\infty)$ be the subgraph of $G$ induced on $V_k\ (k=0,1,\dots,b,\infty)$. Note that each $G_k$ is an undirected graph.
\end{enumerate}
The order $G_i \preceq G_j$ for the consistent components $G_i$ and $G_j$ is defined as
\begin{center}
     \text{``There is a directed path from $G_j$ to $G_i$ in $\tilde{G}_M$,"}
\end{center}
and the order between the consistent component $G_i$ and inconsistent components $G_0$ and $G_\infty$ are defined as $G_0 \preceq G_i$ and $G_i \preceq G_\infty$, respectively. Then, $\preceq$ is a partial order.
Moreover, the decomposition constructed by the algorithm does not depend on an initially chosen maximum matching $M$ of $G$ as step 1), as shown in Lemma~2.3.35 in \cite{murota00}.

The most computational bottleneck in the construction of the DM decomposition is to find the maximum matching of $G$. This can be achieved in $O(|E||V|^{1/2})$ by using the augmentation path algorithm \cite{korte2012combinatorial}. Thus, the computational complexity of DM decomposition is $O(|E||V|^{1/2})$.

\begin{figure}
    \centering
    \includegraphics[width=8.3cm]{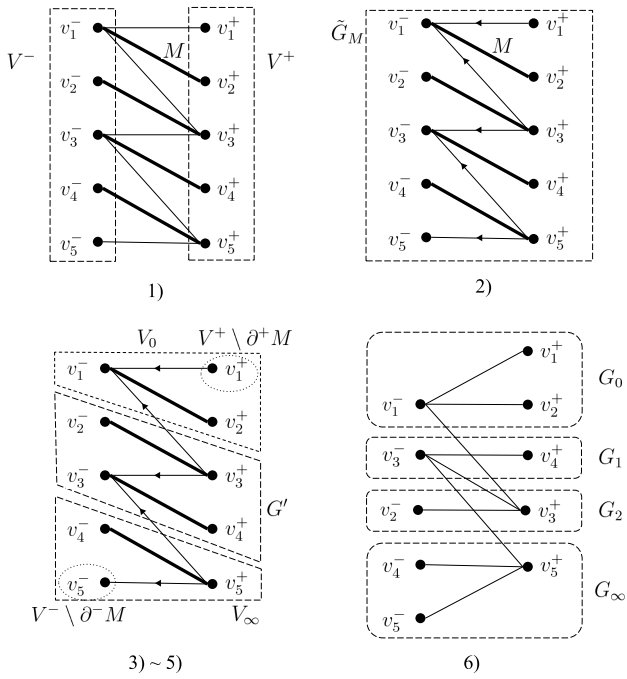}
    \caption{Construction of DM decomposition. Bold edges represent maximum matching.}
    \label{fig:dmdecompalgorithm}
\end{figure}
% do not use \section anymore after \appendix, only \section*
% is possibly needed

% use appendices with more than one appendix
% then use \section to start each appendix
% you must declare a \section before using any
% \subsection or using \label (\appendices by itself
% starts a section numbered zero.)
%

% \appendices

% use section* for acknowledgment
\section*{Acknowledgment}
This work was supported by Japan Society for the Promotion of Science KAKENHI under Grant 20K14760. 
%\appendix
%%%%%%%%%%%%%%%%%%%%

% Can use something like this to put references on a page
% by themselves when using endfloat and the captionsoff option.
\ifCLASSOPTIONcaptionsoff
  \newpage
\fi

% trigger a \newpage just before the given reference
% number - used to balance the columns on the last page
% adjust value as needed - may need to be readjusted if
% the document is modified later
%\IEEEtriggeratref{8}
% The "triggered" command can be changed if desired:
%\IEEEtriggercmd{\enlargethispage{-5in}}

% references section

% can use a bibliography generated by BibTeX as a .bbl file
% BibTeX documentation can be easily obtained at:
% http://mirror.ctan.org/biblio/bibtex/contrib/doc/
% The IEEEtran BibTeX style support page is at:
% http://www.michaelshell.org/tex/ieeetran/bibtex/
\bibliographystyle{IEEEtran}
% argument is your BibTeX string definitions and bibliography database(s)
\bibliography{main.bib}

% Generated by IEEEtran.bst, version: 1.12 (2007/01/11)
\begin{thebibliography}{10}
\providecommand{\url}[1]{#1}
\csname url@samestyle\endcsname
\providecommand{\newblock}{\relax}
\providecommand{\bibinfo}[2]{#2}
\providecommand{\BIBentrySTDinterwordspacing}{\spaceskip=0pt\relax}
\providecommand{\BIBentryALTinterwordstretchfactor}{4}
\providecommand{\BIBentryALTinterwordspacing}{\spaceskip=\fontdimen2\font plus
\BIBentryALTinterwordstretchfactor\fontdimen3\font minus
  \fontdimen4\font\relax}
\providecommand{\BIBforeignlanguage}[2]{{%
\expandafter\ifx\csname l@#1\endcsname\relax
\typeout{** WARNING: IEEEtran.bst: No hyphenation pattern has been}%
\typeout{** loaded for the language `#1'. Using the pattern for}%
\typeout{** the default language instead.}%
\else
\language=\csname l@#1\endcsname
\fi
#2}}
\providecommand{\BIBdecl}{\relax}
\BIBdecl

\bibitem{kalman1960general}
R.~E. Kalman, ``On the general theory of control systems,'' in
  \emph{Proceedings First International Conference on Automatic Control,
  Moscow, USSR}, 1960, pp. 481--492.

\bibitem{mesbahi2010graph}
M.~Mesbahi and M.~Egerstedt, \emph{Graph theoretic methods in multiagent
  networks}.\hskip 1em plus 0.5em minus 0.4em\relax Princeton University Press,
  2010.

\bibitem{gu2015controllability}
S.~Gu, F.~Pasqualetti, M.~Cieslak, Q.~K. Telesford, B.~Y. Alfred, A.~E. Kahn,
  J.~D. Medaglia, J.~M. Vettel, M.~B. Miller, S.~T. Grafton \emph{et~al.},
  ``Controllability of structural brain networks,'' \emph{Nature
  communications}, vol.~6, no.~1, pp. 1--10, 2015.

\bibitem{pagani2013power}
G.~A. Pagani and M.~Aiello, ``The power grid as a complex network: a survey,''
  \emph{Physica A: Statistical Mechanics and its Applications}, vol. 392,
  no.~11, pp. 2688--2700, 2013.

\bibitem{liu2011controllability}
Y.-Y. Liu, J.-J. Slotine, and A.-L. Barab{\'a}si, ``Controllability of complex
  networks,'' \emph{nature}, vol. 473, no. 7346, pp. 167--173, 2011.

\bibitem{olshevsky2015minimal}
A.~Olshevsky, ``Minimal controllability problems,'' \emph{IEEE Transactions on
  Control of Network Systems}, vol.~1, no.~3, pp. 249--258, 2014.

\bibitem{pequito2015framework}
S.~Pequito, S.~Kar, and A.~P. Aguiar, ``A framework for structural input/output
  and control configuration selection in large-scale systems,'' \emph{IEEE
  Transactions on Automatic Control}, vol.~61, no.~2, pp. 303--318, 2015.

\bibitem{olshevsky2015minimum}
A.~Olshevsky, ``Minimum input selection for structural controllability,'' in
  \emph{2015 American Control Conference (ACC)}.\hskip 1em plus 0.5em minus
  0.4em\relax IEEE, 2015, pp. 2218--2223.

\bibitem{lin1974structural}
C.-T. Lin, ``Structural controllability,'' \emph{IEEE Transactions on Automatic
  Control}, vol.~19, no.~3, pp. 201--208, 1974.

\bibitem{shields1976structural}
R.~Shields and J.~Pearson, ``Structural controllability of multiinput linear
  systems,'' \emph{IEEE Transactions on Automatic Control}, vol.~21, no.~2, pp.
  203--212, 1976.

\bibitem{summers2015submodularity}
T.~H. Summers, F.~L. Cortesi, and J.~Lygeros, ``On submodularity and
  controllability in complex dynamical networks,'' \emph{IEEE Transactions on
  Control of Network Systems}, vol.~3, no.~1, pp. 91--101, 2015.

\bibitem{pasqualetti2014controllability}
F.~Pasqualetti, S.~Zampieri, and F.~Bullo, ``Controllability metrics,
  limitations and algorithms for complex networks,'' \emph{IEEE Transactions on
  Control of Network Systems}, vol.~1, no.~1, pp. 40--52, 2014.

\bibitem{sato2020controllability}
K.~Sato and A.~Takeda, ``Controllability maximization of large-scale systems
  using projected gradient method,'' \emph{IEEE Control Systems Letters},
  vol.~4, no.~4, pp. 821--826, 2020.

\bibitem{yan2015spectrum}
G.~Yan, G.~Tsekenis, B.~Barzel, J.-J. Slotine, Y.-Y. Liu, and A.-L.
  Barab{\'a}si, ``Spectrum of controlling and observing complex networks,''
  \emph{Nature Physics}, vol.~11, no.~9, pp. 779--786, 2015.

\bibitem{dai1989singular}
L.~Dai, \emph{Singular control systems}.\hskip 1em plus 0.5em minus 0.4em\relax
  Springer, 1989.

\bibitem{murota00}
K.~Murota, \emph{Matrices and matroids for systems analysis}.\hskip 1em plus
  0.5em minus 0.4em\relax Springer Science \& Business Media, 2009.

\bibitem{duan2010analysis}
G.-R. Duan, \emph{Analysis and design of descriptor linear systems}.\hskip 1em
  plus 0.5em minus 0.4em\relax Springer Science \& Business Media, 2010.

\bibitem{yamada1985generic}
T.~Yamada and D.~Luenberger, ``Generic controllability theorems for descriptor
  systems,'' \emph{IEEE Transactions on Automatic Control}, vol.~30, no.~2, pp.
  144--152, 1985.

\bibitem{yip1981solvability}
E.~Yip and R.~Sincovec, ``Solvability, controllability, and observability of
  continuous descriptor systems,'' \emph{IEEE Transactions on Automatic
  Control}, vol.~26, no.~3, pp. 702--707, 1981.

\bibitem{clark2017input}
A.~Clark, B.~Alomair, L.~Bushnell, and R.~Poovendran, ``Input selection for
  performance and controllability of structured linear descriptor systems,''
  \emph{SIAM Journal on Control and Optimization}, vol.~55, no.~1, pp.
  457--485, 2017.

\bibitem{dulmage1963two}
A.~L. Dulmage and N.~S. Mendelsohn, ``Two algorithms for bipartite graphs,''
  \emph{Journal of the Society for Industrial and Applied Mathematics},
  vol.~11, no.~1, pp. 183--194, 1963.

\bibitem{korte2012combinatorial}
B.~Korte, J.~Vygen, B.~Korte, and J.~Vygen, \emph{Combinatorial
  optimization}.\hskip 1em plus 0.5em minus 0.4em\relax Springer, 2012, vol.~2.

\bibitem{zhang2020sparsest}
Y.~Zhang and W.~Zhang, ``Sparsest input selection for controllability of
  singular systems via a two-step greedy algorithm,'' \emph{IEEE Access},
  vol.~8, pp. 6591--6601, 2020.

\bibitem{berger2013controllability}
T.~Berger and T.~Reis, ``Controllability of linear differential-algebraic
  systems―a survey,'' in \emph{Surveys in differential-algebraic equations
  I}.\hskip 1em plus 0.5em minus 0.4em\relax Springer, 2013, pp. 1--61.

\bibitem{reinschke1997digraph}
K.~J. Reinschke and G.~Wiedemann, ``Digraph characterization of structural
  controllability for linear descriptor systems,'' \emph{Linear algebra and its
  applications}, vol. 266, pp. 199--217, 1997.

\bibitem{mayeda1979strong}
H.~Mayeda and T.~Yamada, ``Strong structural controllability,'' \emph{SIAM
  Journal on Control and Optimization}, vol.~17, no.~1, pp. 123--138, 1979.

\bibitem{popli2019selective}
N.~Popli, S.~Pequito, S.~Kar, A.~P. Aguiar, and M.~Ili{\'c}, ``Selective strong
  structural minimum-cost resilient co-design for regular descriptor linear
  systems,'' \emph{Automatica}, vol. 102, pp. 80--85, 2019.

\end{thebibliography}

% <OR> manually copy in the resultant .bbl file
% set second argument of \begin to the number of references
% (used to reserve space for the reference number labels box)
%\begin{thebibliography}{1}

%\bibitem{IEEEhowto:kopka}
%H.~Kopka and P.~W. Daly, \emph{A Guide to \LaTeX}, 3rd~ed.\hskip 1em plus
  %0.5em minus 0.4em\relax Harlow, England: Addison-Wesley, 1999.

%\end{thebibliography}

% biography section
% 
% If you have an EPS/PDF photo (graphicx package needed) extra braces are
% needed around the contents of the optional argument to biography to prevent
% the LaTeX parser from getting confused when it sees the complicated
% \includegraphics command within an optional argument. (You could create
% your own custom macro containing the \includegraphics command to make things
% simpler here.)
%\begin{IEEEbiography}[{\includegraphics[width=1in,height=1.25in,clip,keepaspectratio]{mshell}}]{Michael Shell}
% or if you just want to reserve a space for a photo:

% You can push biographies down or up by placing
% a \vfill before or after them. The appropriate
% use of \vfill depends on what kind of text is
% on the last page and whether or not the columns
% are being equalized.

%\vfill

% Can be used to pull up biographies so that the bottom of the last one
% is flush with the other column.
%\enlargethispage{-5in}

% that's all folks
\end{document}